\documentclass[twoside]{scrartcl}
\ifdefined\Kindle
\usepackage[margin=1mm,papersize={110mm,75mm}]{geometry}
\AtBeginDocument{\presetkeys{todonotes}{inline}}
\fi

\usepackage{ifluatex}

\usepackage{amsmath,amsfonts,amsthm,braket}
\usepackage{enumitem}
\usepackage[unicode]{hyperref}

\ifluatex
\usepackage{polyglossia}
\setmainlanguage{english}
\usepackage{unicode-math}

\AtBeginDocument{\directlua{require("combining.lua")}}
\newcommand{{\dot \}}{\relax\ifmmode\dot\else\.\fi}
\def{\hat \} {\hat}
\def\́ {\acute}
\def{\overline \} {\overline}
\def{\tilde \} {\tilde}
\def\͠{\widetilde}
\else
\usepackage{amssymb,MnSymbol}
\usepackage[utf8]{inputenc}
\usepackage[english]{babel}
\usepackage{expl3}
\newcommand{\eps}{\varepsilon}
\fi

\usepackage{csquotes}
\usepackage[backend=biber,firstinits=true,sortcites=true,url=false]{biblatex}
\addbibresource{complex-foliations.bib}
\addbibresource{bifurcations.bib}

\usepackage{datetime}
\usepackage[automark]{scrlayer-scrpage}
\pagestyle{scrheadings}
\ohead{\headmark}
\chead{}
\ihead{Genera of non-algebraic leaves}
\ofoot{\pagemark}

\usepackage[final]{ifdraft}
\ifdraft{
\newcommand{\draftmark}{\small {\upshape\bfseries Draft version} from \csname @date\endcsname}
\ifoot[\draftmark]{\draftmark}
\PassOptionsToPackage{draft}{todonotes}
\AtBeginDocument{\date{\today\ \currenttime}}
}{}
\usepackage[backgroundcolor=white,textsize=scriptsize,obeyDraft]{todonotes}

\AtBeginDocument{

}

\DeclareMathOperator{\const}{const}
\DeclareMathOperator{\Aff}{Aff}
\DeclareMathOperator{\codim}{codim}
\DeclareMathOperator{\tr}{tr}
\DeclareMathOperator{\Isohol}{Isohol}
\DeclareMathOperator{\Topo}{Topo}

\newcommand{\wiki}[2]{\href{https://en.wikipedia.org/wiki/#1}{#2}}
\newcommand{\orcid}[1]{ORCID ID: \href{http://orcid.org/#1}{#1}}
\newcommand{\MRauthor}[1]{MR Author ID: \href{http://www.ams.org/mathscinet/search/author.html?mrauthid=#1}{#1}}

\ExplSyntaxOn
\newcommand{\mynewtheorem}[1]{
\newtheorem{#1}{\tl_upper_case:n #1}
\newtheorem*{#1*}{\tl_upper_case:n #1}
\expandafter\edef\csname #1autorefname\endcsname{\tl_upper_case:n #1}
}
\ExplSyntaxOff

\mynewtheorem{theorem}
\mynewtheorem{lemma}
\mynewtheorem{conjecture}
\mynewtheorem{corollary}

\theoremstyle{remark}
\mynewtheorem{remark}

\theoremstyle{definition}
\mynewtheorem{definition}

\title{Genera of non-algebraic leaves of~polynomial foliations of~\texorpdfstring{${\mathbb C}^2$}{ℂ²}}
\author{
	Nataliya Goncharuk%
    \thanks{Cornell University, College of Arts and Sciences, Department of Mathematics, 310 Mallot Hall, Ithaca, NY, 14853, US}
    \thanks{Higher School of Economics, Department of Mathematics, 20 Myasnitskaya street, Moscow 101000, Russia}
    \thanks{\orcid{0000-0002-4270-0510}, \MRauthor{978548}}
	\thanks{The research was supported by Russian President grant No. MD-2859.2014.1.}
	\thanks{Research was supported by RFBR project 13-01-00969-a.}
	\thanks{Research was supported by Proyecto IN103914 PAPIIT (DGAPA) Universidad Nacional Autónoma de México.}
	\and
	Yury Kudryashov%
    \footnotemark[1]
    \footnotemark[2]
    \footnotemark[5]\,
    \footnotemark[6]
    \thanks{\orcid{0000-0003-4286-9276}, \MRauthor{914251}}\,\,
	\thanks{Research carried out within “The National Research University Higher School of Economics’ Academic Fund Program” in 2014--2015, research grant No. 14-01-0193.}
}
\makeatletter
\hypersetup{pdftitle=\@title,pdfauthor={Nataliya Goncharuk, Yury Kudryashov}}
\makeatother
\begin{document}
\maketitle
\todo[inline,author=YK]{
Acknowledge grants supported us when we were rewriting the paper in 2017.
}
\begin{abstract}
	In this article, we prove two results.
	First, we construct a dense subset in the space of polynomial foliations of degree $n$ such that each foliation from this subset has a~leaf with at least $\frac{(n+1)(n+2)}2-4$ handles.
	Next, we prove that for a generic foliation invariant under the map $(x, y)\mapsto (x, -y)$ all leaves (except for a finite set of algebraic leaves) have infinitely many handles.
\end{abstract}
\section{Introduction}

Consider a~polynomial differential equation in ${\mathbb C}^2$ (with complex time),

\begin{equation}
	\label{polynomial.vf}
	\begin{aligned}
		{\dot x} &= P(x,y),\\
		{\dot y} &= Q(x,y),
	\end{aligned}
\end{equation}
where $\max (\deg P, \deg Q) =n$.
The splitting of ${\mathbb C}^2$ into trajectories of this vector field defines a singular analytic foliation of ${\mathbb C}^2$.

Denote by ${\mathcal A}_n$ the space of foliations of ${\mathbb C}^2$ defined by vector fields \eqref{polynomial.vf} of degree at most $n$ with coprime $P$ and $Q$.
Denote by ${\mathcal B}_n$ the space of foliations of ${\mathbb C}P^2$ defined by a polynomial vector field \eqref{polynomial.vf} of degree at most $n$ in each affine chart.
It is easy to show that ${\mathcal A}_n\subset {\mathcal B}_{n+1}\subset {\mathcal A}_{n+1}$.

More geometrically, ${\mathcal B}_n$ can be defined as the space of foliations of ${\mathbb C}P^2$ of projective degree $n$, and ${\mathcal A}_n\setminus {\mathcal B}_n\subset {\mathcal B}_{n+1}$ as the set of foliations of projective degree $n+1$ tangent to the line at infinity, see e.g. \cite[Sec. 25A]{IYbook} for details.

Numerous studies in this field are devoted to the properties of generic foliations from ${\mathcal A}_n$ and ${\mathcal B}_n$, see \cite{Shch06:en} for a~survey.
In particular, the leaves of a generic foliation ${\mathcal F}\in {\mathcal A}_n$, $n\geq 2$, are dense in ${\mathbb C}^2$, see \cite{KhV62:en,Il78:en,Shch84:trans};
see also \cite{M75:en,LR03} for similar results about \emph{locally} generic foliations ${\mathcal F}\in {\mathcal B}_n$, $n\geq 3$.
Another classical question concerns degree and genus of an algebraic leaf of a polynomial foliation, see \cite{LN02}.
We study genera of non-algebraic leaves.

For a generic \emph{analytic} foliation, the question about the topology of a leaf was studied by T.~Firsova and T.~Golenishcheva--Kutuzova.

\begin{theorem*}
	[see \cite{F06:en,K06:en}]
	Among leaves of a generic analytic foliation, countably many are topological cylinders, and the rest are topological discs.
\end{theorem*}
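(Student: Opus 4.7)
My plan is to reduce the topology of every leaf to coincidences of holonomy maps along a fixed transversal, and then to show that such coincidences contribute only countably many leaves for a generic analytic foliation. Pick a nonsingular point $p_0$, a small transversal disc $T$ at $p_0$, and a countable family of generators $g_1, g_2, \dots$ of the holonomy pseudogroup on $T$, each obtained by tracking the leaf along a loop that traverses a fixed finite chain of flow boxes. A leaf $L$ meeting $T$ fails to be a topological disc exactly when there exist two distinct words $w_1\neq w_2$ in these generators and a point $p\in L\cap T$ at which both maps $h_{w_1}(p)$ and $h_{w_2}(p)$ are defined and coincide: such a coincidence is precisely the trace on $T$ of a nontrivial loop in $\pi_1(L)$.

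For each fixed pair $(w_1,w_2)$ the set $Z_{w_1,w_2}=\{p\in T:h_{w_1}(p)=h_{w_2}(p)\}$ is an analytic subset of the one-dimensional disc $T$, so it is either all of $T$ or discrete. The crux of the argument is to show that, for a Baire-generic analytic foliation, $h_{w_1}\equiv h_{w_2}$ holds only when $w_1$ and $w_2$ already represent the same pseudogroup element. Granting this, each $Z_{w_1,w_2}$ is countable; the union over the countably many word pairs is countable; and every leaf that avoids this union is simply connected, i.e.\ a topological disc.

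To finish I would promote the remaining leaves to topological cylinders: higher genus, or two ends, would require two \emph{independent} coincidences on the same leaf, a higher-codimension condition that generic foliations avoid. Hence each exceptional leaf has $\pi_1 = \mathbb{Z}$, and the countability of such cylinder leaves descends from the countability of $\bigcup Z_{w_1,w_2}$.

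The main obstacle is the genericity step, because analytic foliations are rigid and one cannot use smooth bump perturbations to split holonomy maps. I would exploit the infinite-dimensional nature of the space of analytic foliations by constructing explicit one-parameter deformations localized in a flow box through which one of $w_1,w_2$ passes but not the other, or by a Runge-type analytic approximation producing perturbations that alter $h_{w_1}$ while leaving $h_{w_2}$ essentially unchanged. Once such a separation mechanism is available, a standard transversality-and-Baire argument converts it into the genericity statement on which the whole scheme rests.
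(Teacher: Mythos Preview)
The paper does not contain a proof of this statement: it is quoted in the Introduction as a known result, with a citation to Firsova \cite{F06} and Golenishcheva--Kutuzova \cite{K06}, and the paper then moves on to its own theorems about \emph{polynomial} foliations. So there is no ``paper's own proof'' to compare your sketch against.

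That said, your outline is in the right spirit and close to what the cited papers actually do: non-simply-connected leaves are detected by fixed points of nontrivial holonomy words on a transversal, and genericity is a Baire argument over countably many analytic coincidence conditions. Two comments on where your sketch is thin. First, the reduction ``$L$ is not a disc $\Leftrightarrow$ $h_{w_1}(p)=h_{w_2}(p)$ for some $p\in L\cap T$'' is really the statement that some nontrivial holonomy word has a fixed point on $L\cap T$; to make this a clean dichotomy you need every leaf to meet the chosen transversal (or a countable family of them), which requires an argument. Second---and this is the real gap---the step ``higher genus would require two independent coincidences, hence higher codimension, hence generically absent'' is not a codimension count in the parameter space of foliations. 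Once a leaf already carries one cycle, whether it carries a second independent one is a question about that \emph{particular} leaf, not an independent transversality condition you can kill separately. What Firsova actually proves (and what her title advertises) is a Kupka--Smale property: generically every holonomy fixed point is \emph{hyperbolic}, and there are no connections between the associated local invariant sets. Hyperbolicity is what forces the local picture near a cycle to be that of a cylinder and rules out additional independent cycles on the same leaf; without it your ``promotion to cylinders'' step does not go through. Your proposed perturbation mechanism (deform through a flow box traversed by one word but not the other) is exactly the kind of tool used to establish Kupka--Smale, so you are pointed at the right target, but the argument has to be organized around hyperbolicity rather than a bare codimension heuristic.
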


For a generic polynomial foliation, the analogous result is not known.
The fact that almost all leaves are topological discs would follow from Anosov conjecture on identical cycles.

\begin{definition}
	\label{def:identical-cycle}
	\label{def:complex-LC}
    Let $L$ be a leaf of ${\mathcal F}$.
    A non-trivial free homotopy class $[\gamma ]$, $\gamma :S^1\to L$, is called an \emph{identical cycle} if the holonomy along any of its representatives $\gamma $ is identical, and is called a \emph{complex limit cycle} otherwise.
\end{definition}
The holonomy maps along different representatives are conjugate, so it is not important which representative we choose.

\begin{conjecture}
	[D. Anosov]
	A generic polynomial foliation has no identical cycles.
\end{conjecture}

In \autoref{sec:many-handles}, we give a partial answer to the question:
“What topological structures of the leaves can arise in a \emph{dense} subset of ${\mathcal A}_n$?”.
Namely, we prove the following theorem.

\begin{theorem}
	\label{positive.genus}
	For each $n\geq 2$, the set of polynomial foliations having a~leaf with at least $\frac{(n+1)(n+2)}2-4$ handles is dense in ${\mathcal A}_n$.
\end{theorem}

The statement of this theorem is inspired by the following theorem due to D.~Volk \cite{V06:en}.

\begin{theorem}
	[Density of foliations with separatrix connections]
	\label{volk.main}
	For each $n\geq 2$, the set of polynomial foliations having a~separatrix connection is dense in ${\mathcal A}_n$.
\end{theorem}

We also prove a refined version of Volk's theorem, see \autoref{sub:volk}, and use it to prove \autoref{positive.genus}.

In \autoref{sec:inf-genus}, we obtain the following result:

\begin{theorem}
	\label{infinite.genus}
	Let ${\mathcal A}_n^{sym}$ (resp., ${\mathcal B}_n^{sym}$) be the subspace of ${\mathcal A}_n$ (resp., ${\mathcal B}_n$) given by

	\begin{align}
		\label{vf.symmetry}
		P(x, -y)&=-P(x, y),& Q(x, -y)&=Q(x,y).
	\end{align}

	Take $n\geq 2$.
	For any foliation ${\mathcal F}$ from some open dense subset of ${\mathcal A}_n^{sym}$ (resp., ${\mathcal B}_{n+1}^{sym}$), all leaves of ${\mathcal F}$ (except for a finite set of algebraic leaves) have infinite genus.
\end{theorem}

There are some unpublished earlier results in this direction.
For generic homogeneous vector fields, almost all leaves have infinite genus;
the proof is due to Yu.\ Ilyashenko, but it was never written down.
We write it in \autoref{sec:proof-Il}.

In the unpublished version of his thesis, V.\ Moldavskis \cite{MolTh} proves that for a generic vector field of degree $n\geq 5$ with \emph{real coefficients} and the symmetry \eqref{vf.symmetry} each leaf has infinitely-generated first homology group.
However this is only a draft text, so the proof lacks some details and has some gaps.

\section{Preliminaries}

In this section we shall recall some results and introduce required notions and notation.
In some cases we formulate refined versions of earlier results or provide explicit constructions.

\subsection{Genus of a leaf}
\label{sub:genus}

A leaf $L$ of a foliation is a (usually non-compact) Riemann surface.
Since it is not necessarily homeomorphic to a sphere with finitely many handles and holes, we shall provide a~definition of its genus.

\begin{definition}
	\label{def:genus-handles}
	A Riemann surface $L$ has \emph{at least} $g$ \emph{handles}, if it includes $g$ pairwise disjoint handles, i.\ e., subsets homeomorphic to the punctured torus.
	We say that $L$ has an \emph{infinite genus}, if it has at least $g$ handles for all natural $g$.
\end{definition}

Suppose that there exist $g$ pairs of closed loops $(c_1, c_2)$, $(c_3, c_4)$, \dots , $(c_{2g-1}, c_{2g})$ on $L$, such that $c_{2j-1}$ and $c_{2j}$ intersect transversally at exactly one point, and the loops from different pairs are disjoint.
Then a small neighborhood of each pair of cycles $(c_{2j-1}, c_{2j})$ is homeomorphic to a punctured torus, hence $L$ has at least $g$ handles.
We shall say that each pair $(c_{2j-1},c_{2j})$ \emph{generates} a handle.

\subsection{Extension to infinity}
Let us extend a polynomial foliation ${\mathcal F}\in {\mathcal A}_n$ given by \eqref{polynomial.vf} to ${\mathbb C}P^2$.
To this end, make the coordinate change $u=\frac 1 x, v = \frac y x$, and the time change $d\tau  = -u^{n-1} dt$.
The vector field takes the form

\begin{equation}
	\label{vf-uv}
	\begin{aligned}
		{\dot u} &= u{\tilde P}(u,v)\\
		{\dot v} &= v{\tilde P}(u,v) - {\tilde Q}(u,v)
	\end{aligned}
\end{equation}
where ${\tilde P}(u,v)= P\left(\frac 1 u, \frac v u\right) u^n$ and ${\tilde Q}(u,v)= Q\left(\frac 1 u,\frac v u\right) u^n$ are two polynomials of degree at most $n$.

Since ${\dot u}(0, v)\equiv 0$, the line at infinity $\set{u=0}$ is invariant under this vector field.
Denote by $h(v)$ the polynomial ${\dot v}(0, v)=v{\tilde P}(0,v) - {\tilde Q}(0,v)$.
There are two cases.

\begin{description}
	\item[Dicritical case, $h(v)\equiv 0$.]
		In this case \eqref{vf-uv} vanishes identically on $\set{u=0}$.
		Thus it is natural to consider the time change $d\tau =-u^n dt$ instead of $d\tau =-u^{n-1} dt$, and study the vector field
		\begin{align*}
			{\dot u} &= {\tilde P}(u,v)\\
			{\dot v} &= \frac{v{\tilde P}(u,v) - {\tilde Q}(u,v)}u
		\end{align*}
		whose trajectories are almost everywhere transverse to the line at infinity.
		This case corresponds to ${\mathcal B}_n\subset {\mathcal A}_n$.
	\item[Non-dicritical case, $h(v)\nequiv 0$.]
		In this case \eqref{vf-uv} has isolated singular points $a_j\in \set{u=0}$ at the roots of $h$, and $L_\infty = \set{u=0}\setminus \{a_1,a_2,\dots  \}$ is a leaf of the extension of ${\mathcal F}$ to ${\mathbb C}P^2$.
		Making the coordinate change $\left(\frac xy, \frac 1y\right)$, one can easily check that the only point of ${\mathbb C}P^2$ not covered by the affine charts $(x, y)$ and $(u, v)$ is a singular point of the extension of ${\mathcal F}$ to ${\mathbb C}P^2$ if and only if $\deg h<n+1$.
\end{description}

Denote by ${\mathcal A}_n'$ the set of foliations ${\mathcal F}\in {\mathcal A}_n$ such that $h$ has degree $n+1$, and $n+1$ distinct roots $a_j$, $j=1,\dots , n+1$.
In particular, all these foliations are non-dicritical.

Two vector fields define the same foliation ${\mathcal F}\in {\mathcal A}_n'$, if they are proportional, hence ${\mathcal A}_n'$ is a Zariski open subset of the projective space of dimension\footnote{From now on, “dimension” means “complex dimension”.} $(n+1)(n+2)-1$.
${\mathcal A}_n'$ is equipped with a natural topology induced from this projective space.

For each $j$, let $\lambda _j$ be the ratio of the eigenvalues of the linearization of \eqref{vf-uv} at $a_j$ (the eigenvalue corresponding to $L_\infty $ is in the denominator).
Note that $\lambda _j=\frac{{\tilde P}(0, a_j)}{h'(a_j)}$, hence
\begin{equation}
	\label{eqn:Linf-apart}
	\frac{{\tilde P}(0, v)}{h(v)}=\sum _{j=1}^{n+1}\frac{\lambda _j}{v-a_j}.
\end{equation}
Since ${\tilde P}(0, v)$ and $h(v)$ have the same leading coefficient, $\sum \lambda _j=1$.

\begin{remark}
	\label{rem:submersion}
	Any $(n+1)$-tuple of pairs $(a_j, \lambda _j)$ with pairwise distinct $a_j$ and $\sum \lambda _j=1$ corresponds to a foliation ${\mathcal F}\in {\mathcal A}_n'$.
	Indeed, it is enough to take ${\tilde P}(0, v)$ and $h(v)$ to be the numerator and the denominator of the right hand side in \eqref{eqn:Linf-apart}, then put ${\tilde Q}(0, v)=v{\tilde P}(0, v)-h(v)$.
	This uniquely defines the homogeneous components of $P$ and $Q$ of degree $n$, and one can arbitrarily choose homogeneous components of lower degrees.
\end{remark}
\subsection{Monodromy group}
\label{sub:mon-group}

For ${\mathcal F}\in {\mathcal A}_n'$, fix a non-singular point $O\in L_\infty $ and a cross-section $S$ at $O$ given by $v=\const$.
Let $\Omega L_\infty $ be the loop space of $(L_\infty , O)$, i.\ e., the space of all continuous maps $(S^1, pt)\to (L_\infty , O)$.
For a loop $\gamma \in \Omega L_\infty $, denote by ${\mathbf M}_\gamma :(S, O)\to (S, O)$ (a~germ of) the monodromy map along $\gamma $.
It is easy to see that ${\mathbf M}_\gamma $ depends only on the class $[\gamma ]\in \pi _1(L_\infty , O)$, and the map $\gamma \mapsto {\mathbf M}_\gamma $ reverses the order of multiplication,

\[
	{\mathbf M}_{\alpha \beta }={\mathbf M}_\beta \circ {\mathbf M}_\alpha .
\]

The set of all possible monodromy maps ${\mathbf M}_\gamma $, $\gamma \in \Omega L_\infty $, is called the \emph{monodromy pseudogroup} $G=G({\mathcal F})$.
The word “pseudogroup” means that there is no common domain where all elements of $G$ are defined.
We shall deal with domains of various elements of this pseudogroup, so it is not enough to deal with the corresponding \emph{group of germs} of the monodromy maps.
Nevertheless, we will follow the tradition and write “monodromy group” instead of “monodromy pseudogroup”.

\begin{remark}
	The construction of monodromy group heavily relies on the fact that the line at infinity is an algebraic leaf of ${\mathcal F}$.
	Since a~generic foliation from ${\mathcal B}_n$ has no algebraic leaves, this construction does not work for foliations from ${\mathcal B}_n$.
	This is why the analogues of many results on ${\mathcal A}_n$ are not proved for ${\mathcal B}_n$.
	Some analogues are proved for \emph{locally generic} subsets of ${\mathcal B}_n$, namely for foliations close to ${\mathcal A}_n$ \cite{LR03}, or to foliations with a rational first integral \cite{M75:en}.
	See also \autoref{rem:many-handles-Bn} below.
\end{remark}

Choose $n+1$ loops $\gamma _j\in \Omega L_\infty $, $j = 1,2,\dots ,n+1$, surrounding the points $a_j$, respectively.
Then the pseudogroup $G({\mathcal F})$ is generated by the monodromy maps ${\mathbf M}_j={\mathbf M}_{\gamma _j}$.
It is easy to see that the multipliers $\mu _j={\mathbf M}_j'(0)$ are equal to $\exp {2 \pi  i \lambda _j}$.
Recall that $\sum \lambda _j=1$, hence, $\prod \mu _j=1$.

Many theorems about properties generic foliations ${\mathcal F}\in {\mathcal A}_n$ require all singular points of a generic foliation to be complex hyperbolic in the following sense.
\begin{definition}
	\label{def:complex-hyperbolic}
	We say that a~singular point of a holomorphic vector field is \emph{complex hyperbolic}, if the linearization of the vector field at this point has two eigenvalues, and their ratio is not a real number.
\end{definition}
\subsection{Rigidity of polynomial foliations}
\label{sub:rigidity}
A generic foliation of ${\mathbb R}^2$ is \emph{structurally stable}.
More precisely, in the space of smooth vector fields on an orientable $2$-dimensional manifold, there exists an open dense subset such that each vector field from this subset is topologically conjugate to all vector fields sufficiently $C^1$-close to it, see \cites[Theorem 1]{P62}{P63}.
One can easily check that a generic polynomial foliation of ${\mathbb R}^2$ belongs to this open dense set.

For a~generic polynomial foliation of ${\mathbb C}^2$, we have the opposite property, called \emph{rigidity};
informally, a foliation sufficiently close to ${\mathcal F}$ is topologically conjugate to it only if it is affine conjugate to ${\mathcal F}$.
There are a few theorems formalizing this notion, see \cite{Il78:en,Shch84:trans,GM88,N94,VR16-utmost}.
We shall need one of them.

Fix a foliation ${\tilde {\mathcal F}}\in {\mathcal A}_n'$.
Choose $O$, $S$, and $\gamma _j$ as in \autoref{sub:mon-group}.
Fix a~neighborhood ${\mathcal U}$, ${\tilde {\mathcal F}}\in {\mathcal U}\subset {\mathcal A}_n'$, such that for foliations ${\mathcal F}\in {\mathcal U}$, their singular points at infinity are close to those of ${\tilde {\mathcal F}}$, hence the loops $\gamma _j$ still surround these points.
Following \cite[Sec.~28\textbf{E}$_1$]{IYbook}, we say that ${\mathcal F}\in {\mathcal U}$ belongs to
\begin{description}
	\item[$\Isohol({\tilde {\mathcal F}})$,]
		if there exists a~germ of~a~homeomorphism~${h:(S, O)\to (S, O)}$ conjugating each ${\mathbf M}_j({\mathcal F})$ to ${\mathbf M}_j({\tilde {\mathcal F}})$;
	\item[$\Topo({\tilde {\mathcal F}})$,]
		if there exists a~homeomorphism~${H:{\mathbb C}P^2\to {\mathbb C}P^2}$ sending the leaves of~${\mathcal F}$ to the leaves of~${\tilde {\mathcal F}}$, the line at infinity to itself, and each $\gamma _j$ to~a~curve homotopic to $\gamma _j$ in $\pi _1(L_\infty , O)$.
	\item[$\Aff({\tilde {\mathcal F}})$,]
		if it is affine equivalent to ${\tilde {\mathcal F}}$.
\end{description}
For the germs of these three sets at each point ${\tilde {\mathcal F}}\in {\mathcal A}_n'$, it is easy to see that
\[
	(\Isohol({\tilde {\mathcal F}}), {\tilde {\mathcal F}})\supseteq (\Topo({\tilde {\mathcal F}}), {\tilde {\mathcal F}})\supseteq (\Aff({\tilde {\mathcal F}}), {\tilde {\mathcal F}}),
\]
see \cite[Sec. 28\textbf{E}$_1$]{IYbook} for details.
\begin{theorem}
	[see {\cite[Theorem 28.41]{IYbook}}]
	\label{thm:rigidity-isohol}
	There exists an open dense subset ${\mathcal A}_n^R\subset {\mathcal A}_n'$ such that for each ${\tilde {\mathcal F}}\in {\mathcal A}_n^R$, we have
	\begin{equation}
		\label{eqn:rigidity-isohol}
		(\Isohol({\tilde {\mathcal F}}), {\tilde {\mathcal F}})=(\Topo({\tilde {\mathcal F}}), {\tilde {\mathcal F}})=(\Aff({\tilde {\mathcal F}}), {\tilde {\mathcal F}}).
	\end{equation}
	The exceptional set ${\mathcal A}_n\setminus {\mathcal A}_n^R$ is included by a union of a real algebraic subset $\Sigma _n$, and a real analytic subset $\Sigma _n'$ of real codimension at least~$2$.
\end{theorem}
The book \cite{IYbook} includes a detailed proof of \cite[Theorem 28.32]{IYbook} which is a~weaker version of this theorem, with larger exceptional set;
for Theorem 28.41, the authors say that the proof is analogous but requires a~stronger result on~rigidity of~groups of~germs~$({\mathbb C}, 0)\to ({\mathbb C}, 0)$.
The authors refer to~\citeauthor{Shch84:trans}, see the original article~\cite{Shch84:trans} or~a~survey~\cite{Shch06:en}, for this stronger result.
An even stronger version of this group rigidity theorem was proved by \citeauthor{N94} in \cite[Theorems 4 and 5.1]{N94}.

\begin{remark}
	Formally, Theorem 28.41 differs from the result stated above.
	Namely, it claims \emph{reasonable rigidity} of ${\tilde {\mathcal F}}$, see \cite[Definition 28.30]{IYbook},
	i.e.\ only the right equality in~\eqref{eqn:rigidity-isohol}.
	However, in the text surrounding the statement of Theorem 28.32, the authors also claim~\eqref{eqn:rigidity-isohol}, and actually prove it later.
\end{remark}
\subsection{Non-commutativity of 	monodromy groups}
As a lemma for the stronger version of the Rigidity Theorem, \citeauthor{Shch84:trans} proves that a~foliation ${\mathcal F}\in {\mathcal A}_n'\setminus \Sigma _n$ has a non-commutative unsolvable monodromy group, see \cites[Propositions 8 and 9]{Shch84:trans}[Theorem 9]{Shch06:en}.
Here $\Sigma _n$ is a union of countably many complex algebraic subsets;
it is included by a real algebraic subset, cf.~\autoref{thm:rigidity-isohol}.

Let ${\mathcal A}_n^{NC}\supset {\mathcal A}_n'\setminus \Sigma _n$ be the set of foliations having non-commutative monodromy group at infinity.
Near each foliation ${\mathcal F}\in {\mathcal A}_n'$, ${\mathcal A}_n^{NC}$ is the complement to the intersection of the analytic subsets ${\mathcal A}^C_{n,i,j}$ given by ${\mathbf M}_i\circ {\mathbf M}_j={\mathbf M}_j\circ {\mathbf M}_i$.
These subsets depend on the way we enumerate the singular points $a_j$, and on the choice of the loops $\gamma _j$.
Since the intersection of these analytic subsets is nowhere dense, \emph{at least one} of them has positive codimension.
In fact, this implies that \emph{all} of them have positive codimension.

Indeed, consider two subsets, ${\mathcal A}^C_{n,1,2}$ and ${\mathcal A}^C_{n,i,j}$.
Choose an orientation preserving homeomorphism of the sphere that sends the \emph{set} of singular points $a_l$ to itself, $O$ to $O$, $a_1$ to $a_i$, $a_2$ to $a_j$, and the homotopic classes of $\gamma _1$, $\gamma _2$ to the homotopic classes of $\gamma _i$, $\gamma _j$, respectively.
Fix an isotopy $H_t$ of the sphere joining identity to this homeomorphism such that $H_t(O)\equiv O$.
Let $a_l(t)=H_t(a_l)$ be the trajectories of the singular points under this homotopy.
Due to \autoref{rem:submersion}, there exists a curve ${\mathcal F}_t$ in ${\mathcal A}_n'$, ${\mathcal F}_0={\mathcal F}$, such that $\set{a_l(t)}$ are the singular points of ${\mathcal F}_t$, their characteristic numbers are $\lambda _l$, and their homogeneous components of lower degrees coincide with those of ${\mathcal F}$.
Then ${\mathcal F}_1={\mathcal F}$, but this isotopy sends ${\mathbf M}_1$ and ${\mathbf M}_2$ to ${\mathbf M}_i$ and ${\mathbf M}_j$, respectively.
Hence, ${\mathcal A}^C_{n,1,2}$ and ${\mathcal A}^C_{n,i,j}$ have the same dimension.
Thus \emph{all} ${\mathcal A}^C_{n,i,j}$ have positive codimension.

\begin{corollary}
	\label{non-commutative.monodromy}
	There exists an open dense subset of ${\mathcal A}_n'$ such that for each ${\mathcal F}$ from this subset none of ${\mathbf M}_i$, ${\mathbf M}_j$ commute.

	More precisely, let $U\subset {\mathcal A}_n'$ be an open subset such that we can choose $O$ and $\gamma _j$ independently on ${\mathcal F}\in U$.
	Then for ${\mathcal F}\in U$ off some analytic subset of positive codimension, none of ${\mathbf M}_i$, ${\mathbf M}_j$ commute.
\end{corollary}

\subsection{Infinite number of limit cycles}

The notion of a complex limit cycle, see \autoref{def:complex-LC}, generalizes the notion of a limit cycle of a foliation of ${\mathbb R}^2$.
Note that each isolated fixed point $w$ of some monodromy map ${\mathbf M}_\gamma \in G$ gives us a limit cycle, namely we can take the lifting of the loop $\gamma $ that starts at $w$.

\begin{definition}
	A~set of limit cycles of a~foliation is called \emph{homologically independent}, if for any leaf $L$ all the cycles located in this leaf are linearly independent in $H_1(L)$.
\end{definition}

It turns out that a generic foliation ${\mathcal F}\in {\mathcal A}_n$, $n\geq 2$, possesses countably many homologically independent limit cycles.
This was proved in \cite{Il78:en} for a full measure subset of ${\mathcal A}_n$, $n\geq 2$, then in \cite{SRO98} for an open dense subset of ${\mathcal A}_n$, $n\geq 3$.
Recently, we proved this statement for an open dense subset of ${\mathcal A}_n$, $n\geq 2$, and for its open neighborhood in ${\mathcal B}_{n+1}$, see \cite{GK-BLC,GK-CLC}.

The proofs in \cite{Il78:en,SRO98,GK-BLC} rely on the fact that for a generic foliation ${\mathcal F}\in {\mathcal A}_n$, the fixed points of the elements of the monodromy pseudogroup are dense in a small neighborhood of the origin.
Each of the fixed points corresponds to a  limit cycle;
the proof of their homological independence is not trivial.

We provide a refined version of Ilyashenko's lemma \cite[Lemma 3*]{Il78:en}.

\begin{lemma}
	\label{lem:LCs-dense}
	Suppose that two monodromy maps ${\mathbf M}_1$ and ${\mathbf M}_2$ do not commute, and their multipliers satisfy

	\begin{itemize}
		\item $0<|\mu _1|<1, |\mu _2|>1$;
		\item the multiplicative semigroup generated by $\mu _1$ and $\mu _2$ is dense in ${\mathbb C}^*$.
	\end{itemize}

	Then the set of hyperbolic fixed points of compositions of the form ${\mathbf M}_1^{-s}{\mathbf M}_2^{t}{\mathbf M}_1^{r+s}{\mathbf M}_2$, $r, s, t\in {\mathbb N}$, is dense in a~small neighborhood of the origin.
\end{lemma}

Though the statement of \cite[Lemma 3*]{Il78:en} slightly differs from \autoref{lem:LCs-dense}, Ilyashenko's proof actually implies this lemma.
Nevertheless, we shall provide the proof, both for completeness, and because we shall need a refined version of an auxiliary lemma, see \autoref{cor:approximate-linear} below.
See also \cite[Lemmas 3 and 4]{V06:en}.

Since $0<|\mu _1|<1$, we can linearize ${\mathbf M}_1$ in a small neighborhood of the origin, see \cite{Koen84,Sch70,Sch71}.
We shall work in this linearizing chart.
The next lemma and its corollary show that we can choose $r, s, t\in {\mathbb N}$ so that ${\mathbf M}_1^{-s}{\mathbf M}_2^{t}{\mathbf M}_1^{r+s}$ approximates any prescribed linear map.

\begin{lemma}
	\label{lem:approximate-derivative}
	Consider an analytic map $g:U\to U$, $g(0)=0$, defined in a neighborhood $U$ of the origin,
	and a complex number $\mu $, $0<|\mu |<1$.
	Then $\mu ^{-s}g(\mu ^sz)$, $s\in {\mathbb N}$, converges to $g'(0)z$ uniformly in $z$ on any disc $|z|\leq R$ as $s\to \infty $.
	Moreover, if $g$ analytically depends on a parameter $\eps \in V\subset {\mathbb C}^n$ then the convergence above is uniform in $\eps $ on compact sets.
\end{lemma}
\begin{proof}
	Consider a family of maps $g_\eps :U\to U$, $g_\eps (0)=0$, $\eps \in K\subset {\mathbb C}^n$, $K$ is a compact set.
	Put $z_s=\mu ^sz$, then for $|z|\leq R$ we have
	\[
		\left| \mu ^{-s}g_\eps (\mu ^sz)-g_\eps '(0)z \right|
			=|z|\cdot \left| \frac{g_\eps (z_s)}{z_s}-g_\eps '(0) \right|
			\leq R\left| \frac{g_\eps (z_s)}{z_s}-g_\eps '(0) \right|.
	\]
	Since $g_\eps (z)$ is analytic both in $z$ and $\eps $, the latter fraction tends to $0$ as $z_s\to 0$ uniformly in $\eps \in K$.
\end{proof}

\begin{corollary}
	\label{cor:approximate-linear}
	Let $g:({\mathbb C}, 0)\to ({\mathbb C}, 0)$ be an expanding analytic germ, $|g'(0)|>1$.
	Let $\mu $, $|\mu |<1$, be a number such that the multiplicative semigroup generated by $\mu $ and $g'(0)$ is dense in ${\mathbb C}^*$.
	Then for each $\nu \in {\mathbb C}^*$ and $R>0$, the linear map $z\mapsto \nu z$ can be uniformly approximated in the disc $|z|\leq R$ by a map of the form $z\mapsto \mu ^{-s}g^t(\mu ^{r+s}z)$, $r,s,t\in {\mathbb N}$.

	If $g$ analytically depends on a parameter $\eps \in V\subset {\mathbb C}^n$ so that $g'(0)=\const$, then the approximation can be made uniform in $\eps $ on any compact subset $K\subset V$.
\end{corollary}
\begin{proof}
	Since the multiplicative semigroup generated by $\mu $ and $g'(0)$ is dense in ${\mathbb C}^*$, we can choose $t, r\in {\mathbb N}$ such that $g'(0)^t\mu ^r$ is very close to $\nu $.
	Then \autoref{lem:approximate-derivative} applied to the map $z\mapsto g^t(\mu ^rz)$ completes the proof.
\end{proof}

\begin{proof}
	[Proof of \autoref{lem:LCs-dense}]
	\phantomsection\label{proof:lem:LCs-dense}
	As we mentioned above, we will work in a linearizing chart for ${\mathbf M}_1$.
	Choose a disc $D_R=\set{z|\relax |z|\leq R}$ in the domain of ${\mathbf M}_1$, and consider an open subset $U\subset D_R$.
	Let us find $r, s, t\in {\mathbb N}$ such that the map ${\mathbf M}_1^{-s}{\mathbf M}_2^{t}{\mathbf M}_1^{r+s}{\mathbf M}_2$ has a hyperbolic fixed point in $U$.

	Fix a point $w\in U$, and consider the map $h$ given by $h(z)=\frac{w}{{\mathbf M}_2(w)}{\mathbf M}_2(z)$.
	Clearly, $w$ is a fixed point of $h$, and $h'(w)=\frac{w}{{\mathbf M}_2(w)}{\mathbf M}_2'(w)$.
	Note that we can choose $w\in U$ so that $|h'(w)|\neq 1$.
	Indeed, otherwise $\left|\frac{w}{{\mathbf M}_2(w)}{\mathbf M}_2'(w)\right|\equiv 1$ in $U$, hence ${\mathbf M}_2'(w)=Cw^{-1}{\mathbf M}_2(w)$, where $C$ is a constant with $|C|=1$.
	Thus ${\mathbf M}_2(w)=C_1w^C$.
	Since ${\mathbf M}_2$ is holomorphic at the origin, we have $C=1$ and ${\mathbf M}_2(w)=C_1w$, which contradicts ${\mathbf M}_1\circ {\mathbf M}_2\neq {\mathbf M}_2\circ {\mathbf M}_1$.

	So, we have $w\in U$, $h(w)=w$, $|h'(w)|\neq 1$.
	Recall that in our chart ${\mathbf M}_1(z)=\mu _1z$, hence \autoref{cor:approximate-linear} provides us with $r, s, t\in {\mathbb N}$ such that the map ${\mathbf M}_1^{-s}{\mathbf M}_2^{t}{\mathbf M}_1^{r+s}$ uniformly approximates the linear map $z\mapsto \frac{w}{{\mathbf M}_2(w)} z$ in $D_R\supset U$.
	Thus the map ${\mathbf M}_1^{-s}{\mathbf M}_2^{t}{\mathbf M}_1^{r+s}{\mathbf M}_2$ approximates $h$, hence it has a hyperbolic fixed point near $w$.
\end{proof}
\subsection{Volk's Theorem}
\label{sub:volk}

\subsubsection{Statement of the theorem}
In \cite{V06:en} D.~Volk proves that foliations with separatrix connections are dense in ${\mathcal A}_n$.
Actually, his arguments work in a~more general setting.

\begin{theorem}
	\label{thm:volk-holo-maps}
	Let ${\tilde {\mathcal F}}$ be a polynomial foliation of degree $n\geq 2$.
	Let $A, B$ be holomorphic maps of a neighborhood of ${\tilde {\mathcal F}}$ in ${\mathcal A}_n$ to ${\mathbb C}^2$ such that $A({\tilde {\mathcal F}})$ and $B({\tilde {\mathcal F}})$ are non-singular points of ${\tilde {\mathcal F}}$.
	Then there exists ${\mathcal F}$ arbitrarily close to ${\tilde {\mathcal F}}$ such that the points $A({\mathcal F})$ and $B({\mathcal F})$ belong to the same leaf of ${\mathcal F}$.
\end{theorem}

The original Volk's Theorem follows from this theorem if $A({\mathcal F})$ and $B({\mathcal F})$ belong to separatrices of two different singular points of ${\mathcal F}$.
However, we shall need a more precise statement.

Take a~compact analytic submanifold ${\mathcal M}\subset {\mathcal A}_n^R$ with boundary.
Suppose that

\begin{itemize}
	\item $\dim {\mathcal M}>\dim\Aff({\mathbb C}^2)=6$;
	\item $\mu _1=\const$ and $\mu _2=\const$ on ${\mathcal M}$;
	\item $|\mu _1|<1$ and $|\mu _2|<1$;
	\item the multiplicative semigroup generated by $\mu _1$ and $\mu _2^{-1}$ is dense in ${\mathbb C}^*$.
\end{itemize}

Fix a foliation ${\tilde {\mathcal F}}\in {\mathcal M}\setminus \partial {\mathcal M}$, its non-singular point $O\in L_\infty $, and a cross-section $S=\set{v=\const}$, $S\cap L_\infty =\set{O}$.
Let $z_{\mathcal F}:(S, O)\to ({\mathbb C}, 0)$ be a linearizing chart for~${\mathbf M}_1$ analytically depending on ${\mathcal F}$.
In the rest of this section, ${\mathbf M}_j$, $A({\mathcal F})$, $B({\mathcal F})$ etc.\ are written in the corresponding chart $z_{\mathcal F}$.
Denote by ${\tilde {\mathbf M}}_j$ the $j$-th generator of $G({\tilde {\mathcal F}})$ written in $z_{\tilde {\mathcal F}}$.
In particular, ${\mathbf M}_1(z)=\mu _1z$, ${\tilde {\mathbf M}}_1({\tilde z})=\mu _1{\tilde z}$.
Let us rescale $z_{\mathcal F}$, choose its representative, and diminish $S$ and ${\mathcal M}$ so that for all ${\mathcal F}\in {\mathcal M}$, $S$ is in the domain of $z_{\mathcal F}$, and $z_{\mathcal F}(S)$ includes the unit disc.

Let $A, B:{\mathcal M}\to S$ be two holomorphic functions such that $0<|A({\tilde {\mathcal F}})|<1$ and $0<|B({\tilde {\mathcal F}})|<1$.

\begin{theorem}
	\label{volk.details}
	In the setting introduced above, for any $\eps >0$ there exist a~neighborhood~${\mathcal U}$, ${\tilde {\mathcal F}}\in {\mathcal U}\subset {\mathcal M}$, and a~loop~$\gamma \in \Omega L_\infty $ such that for ${\mathcal F}\in {\mathcal U}$ we have
	\begin{itemize}
		\item $A({\mathcal F})$ belongs to the domain of~${\mathbf M}_\gamma $;
		\item ${\mathbf M}_\gamma (A({\mathcal F}))=B({\mathcal F})$ defines a non-empty submanifold of~codimension one;
		\item the derivative~${\mathbf M}_\gamma '(A({\mathcal F}))$ is $\eps $-close to $\frac{B({\tilde {\mathcal F}})}{A({\tilde {\mathcal F}})}$.
	\end{itemize}

	The loop $\gamma $ can be constructed as the concatenation $\gamma =\alpha \beta $ of the following loops.
	There exists an index $i$ such that for each sufficiently large $p$ we can choose either $\alpha =\gamma _1^p\gamma _i$ or $\alpha =\gamma _1^p$.
	After $\alpha $ is fixed, there exists a~triple of~arbitrarily large numbers $(r, s, t)\in {\mathbb N}^3$ such that we can take $\beta =\gamma _1^{r+s}\gamma _2^{-t}\gamma _1^{-s}$.
\end{theorem}

\begin{remark}
	The most important part of this theorem is the second assertion.
	We include technical details like the estimate on ${\mathbf M}_\gamma '(A({\mathcal F}))$ and the explicit construction of $\gamma $, because we will need them later in \autoref{sec:many-handles}.
\end{remark}
\autoref{thm:volk-holo-maps} follows from \autoref{volk.details}, and density of the leaves of a generic foliation ${\mathcal F}\in {\mathcal A}_n$.

The rest of this section is devoted to the proof of \autoref{volk.details}.
First, we shall prove it in a particular case, if the ratio $\frac{B({\mathcal F})}{A({\mathcal F})}$ is a non-constant function of ${\mathcal F}$.
This case with $\gamma =\beta $ instead of $\gamma =\alpha \beta $ easily follows from \autoref{cor:approximate-linear}, see \autoref{volk.details.submanifold} below.

Next, we shall choose $\alpha $ such that $\frac{B({\mathcal F})}{{\mathbf M}_\alpha (A({\mathcal F}))}$ is a non-constant function of ${\mathcal F}$, and use ${\mathbf M}_\alpha (A({\mathcal F}))$ as $A({\mathcal F})$ in \autoref{volk.details.submanifold}.
The choice of $\alpha $ heavily relies on the Rigidity \autoref{thm:rigidity-isohol}, see \autoref{volk.details.non.proportional} below.

\subsubsection{Proof in a particular case}
\begin{lemma}
	\label{volk.details.submanifold}
	In the setting of~\autoref{volk.details}, suppose that the ratio $\frac{B({\mathcal F})}{A({\mathcal F})}$, ${\mathcal F}\in ({\mathcal M}, {\tilde {\mathcal F}})$, is a~non-constant function of ${\mathcal F}$.
	Then the assertions of~\autoref{volk.details} hold for~$\gamma =\beta $ instead of~$\gamma =\alpha \beta $.
\end{lemma}
\begin{proof}
	Put $\xi =\dfrac{B({\tilde {\mathcal F}})}{A({\tilde {\mathcal F}})}$, then the equality $B({\mathcal F})=\xi A({\mathcal F})$ defines a non-empty codimension one analytic submanifold of ${\mathcal M}$.

	Approximate the linear map $z\mapsto \xi z$ by a~map of the form ${\mathbf M}_\beta ={\mathbf M}_1^{-s}{\mathbf M}_2^{-t}{\mathbf M}_1^{r+s}$, see~\autoref{cor:approximate-linear}.
	This approximation is uniform in ${\mathcal F}\in {\mathcal M}$.
	For ${\mathcal F}$ sufficiently close to ${\tilde {\mathcal F}}$, $A({\mathcal F})$ belongs to the domain of ${\mathbf M}_\beta $, and ${\mathbf M}_\beta (A({\mathcal F}))=B({\mathcal F})$ defines a codimension-one nonempty submanifold.

	Finally, due to the Cauchy estimates, ${\mathbf M}_\beta $ approximates the multiplication by $\xi $ in $C^\omega $ topology, hence ${\mathbf M}_\beta '(A({\mathcal F}))$ is close to~$\xi $.
\end{proof}

\subsubsection{Reduction to the particular case}
\begin{lemma}
	\label{volk.details.non.proportional}
	In the setting of \autoref{volk.details}, we can find an index $i$ such that for each sufficiently large~$p$ either for $\alpha =\gamma _1^p\gamma _i$ or for $\alpha =\gamma _1^p$,
	\begin{itemize}
		\item $A({\mathcal F})$ belongs to the domain of ${\mathbf M}_\alpha $ for all ${\mathcal F}\in {\mathcal M}$;
		\item the ratio $\dfrac{B({\mathcal F})}{{\mathbf M}_\alpha (A({\mathcal F}))}$ is a non-constant function of~${\mathcal F}\in {\mathcal M}$;
		\item the ratio ${\mathbf M}_\alpha '(A({\mathcal F}))\div \dfrac{{\mathbf M}_\alpha (A({\mathcal F}))}{A({\mathcal F})}$ tends to~$1$ uniformly in ${\mathcal F}\in {\mathcal M}$.
	\end{itemize}
\end{lemma}

\begin{remark}
	\autoref{volk.details.non.proportional} is a refined version of the union of \cite[Lemmas 6 and 7]{V06:en}.
	The proof of \cite[Lemma 6]{V06:en} deals separately with $n\geq 3$ and $n=2$;
	unfortunately, the proof for the case $n=2$ has a gap.
	We give another proof which works for all $n\geq 2$.
\end{remark}

\begin{proof}
	[Proof of \autoref{volk.details.non.proportional}]
	Let us prove the assertions one by one.

	\paragraph{Domains of definition.}
	Recall that $|A({\mathcal F})|<1$ and $|\mu _1|<1$, hence $A({\mathcal F})$ belongs to the domain of ${\mathbf M}_1^p$ for all $p$, and~$\left|{\mathbf M}_1^p(A({\mathcal F}))\right|<\mu _1^p\to 0$ as $p\to \infty $.
	Therefore, for $p$ large enough, ${\mathbf M}_1^p(A({\mathcal F}))$ belongs to the domains of~all maps ${\mathbf M}_j$.
	So, $A({\mathcal F})$ belongs to the domains of all the maps of the form ${\mathbf M}_1^p$ and ${\mathbf M}_j\circ {\mathbf M}_1^p$.

	\paragraph{Non-constant ratio.}
	Note that
	\[
		\frac{{\mathbf M}_{\gamma _1^p\gamma _i}(A({\mathcal F}))}{{\mathbf M}_{\gamma _1^p}(A({\mathcal F}))}=\frac{B({\mathcal F})}{{\mathbf M}_{\gamma _1^p}(A({\mathcal F}))}\div \frac{B({\mathcal F})}{{\mathbf M}_{\gamma _1^p\gamma _i}(A({\mathcal F}))},
	\]
	hence it is enough to prove that the left hand side is a non-constant function of ${\mathcal F}$ for $p$ large enough.
	Put $A_p({\mathcal F})={\mathbf M}_{\gamma _1^p}(A({\mathcal F}))=\mu _1^pA({\mathcal F})$, then $A_p({\mathcal F})\to 0$ as $p\to \infty $, and
	\[
		\frac{{\mathbf M}_{\gamma _1^p\gamma _i}(A({\mathcal F}))}{{\mathbf M}_{\gamma _1^p}(A({\mathcal F}))}=\frac{{\mathbf M}_i(A_p({\mathcal F}))}{A_p({\mathcal F})}.
	\]

	From now on, let ${\mathcal F}\in {\mathcal M}$ be a foliation close to ${\tilde {\mathcal F}}$ that we will choose later.
	Then it is enough to find ${\mathcal F}\in {\mathcal M}$ and $i$ such that
	\begin{align*}
		\frac{{\tilde {\mathbf M}}_i(A_p({\tilde {\mathcal F}}))}{A_p({\tilde {\mathcal F}})}&\neq \frac{{\mathbf M}_i(A_p({\mathcal F}))}{A_p({\mathcal F})},
		\intertext{or equivalently}
		{\tilde {\mathbf M}}_i\left(\frac{A({\tilde {\mathcal F}})}{A({\mathcal F})}A_p({\mathcal F})\right)&\neq \frac{A({\tilde {\mathcal F}})}{A({\mathcal F})}{\mathbf M}_i(A_p({\mathcal F})).
	\end{align*}
	Since $A_p({\mathcal F})\to 0$ as $p\to \infty $, it is enough to find ${\mathcal F}\in {\mathcal M}$ close to ${\tilde {\mathcal F}}$ such that the map given by $z\mapsto \frac{A({\tilde {\mathcal F}})}{A({\mathcal F})}z$ in the charts $(z_{\mathcal F}, z_{\tilde {\mathcal F}})$ does not conjugate $G({\mathcal F})$ to $G({\tilde {\mathcal F}})$.
	Due to \autoref{thm:rigidity-isohol}, this holds for any ${\mathcal F}\in ({\mathcal M}, {\tilde {\mathcal F}})$ which is not affine equivalent to ${\tilde {\mathcal F}}$.
	Recall that $\dim {\mathcal M}>\dim\Aff({\mathbb C}^2)$, hence such ${\mathcal F}$ exists.

	\paragraph{Estimate on the derivative.}
	If $\alpha =\gamma _1^p$, then the ratio ${\mathbf M}_\alpha '(A({\mathcal F}))\div \dfrac{{\mathbf M}_\alpha (A({\mathcal F}))}{A({\mathcal F})}$ equals one.
	Let us consider the remaining case $\alpha =\gamma _1^p\gamma _i$.
	In this case
	\[
		{\mathbf M}_\alpha '(A({\mathcal F}))\div \frac{{\mathbf M}_\alpha (A({\mathcal F}))}{A({\mathcal F})}={\mathbf M}_i'(A_p({\mathcal F}))\div \frac{{\mathbf M}_i(A_p({\mathcal F}))}{A_p({\mathcal F})}.
	\]
	Note that both ${\mathbf M}_i'(z)$ and $\frac{{\mathbf M}_i(z)}{z}$ tend to ${\mathbf M}_i'(0)$ as $z\to 0$ uniformly in ${\mathcal F}$, hence the right hand side of the equality above tends to one as $p\to \infty $ uniformly in ${\mathcal F}\in {\mathcal M}$.
\end{proof}

\subsubsection{Deducing \autoref{volk.details} from the lemmas}
\begin{proof}
	[Proof of \autoref{volk.details}]
	Choose $\alpha $ as in \autoref{volk.details.non.proportional}, then choose a neighborhood ${\mathcal U}$, ${\tilde {\mathcal F}}\in {\mathcal U}\subset {\mathcal M}$, such that $\dfrac{{\mathbf M}_\alpha (A({\mathcal F}))}{A({\mathcal F})}\approx \dfrac{{\tilde {\mathbf M}}_\alpha (A({\tilde {\mathcal F}}))}{A({\tilde {\mathcal F}})}$ for ${\mathcal F}\in {\mathcal U}$.
	Then the ratio ${\mathbf M}_\alpha '(A({\mathcal F}))\div \dfrac{{\tilde {\mathbf M}}_\alpha (A({\tilde {\mathcal F}}))}{A({\tilde {\mathcal F}})}$ is close to~$1$ for ${\mathcal F}\in {\mathcal U}$.

	Apply \autoref{volk.details.submanifold} to ${\mathbf M}_\alpha \circ A$, $B$ and ${\mathcal U}$ instead of $A$, $B$ and ${\mathcal M}$.
	Then the resulting curves $\alpha $, $\beta $ satisfy the assertions of \autoref{volk.details} in the new domain ${\mathcal U}$ provided by~\autoref{volk.details.submanifold}.
	For the first two assertion this is clear, and the last one immediately follows from the chain rule,
	\[
		{\mathbf M}_\gamma '(A({\mathcal F}))
			={\mathbf M}_\beta '({\mathbf M}_\alpha (A({\mathcal F}))){\mathbf M}_\alpha '(A({\mathcal F}))
			\approx \frac{B({\tilde {\mathcal F}})}{{\tilde {\mathbf M}}_\alpha (A({\tilde {\mathcal F}}))}\times \frac{{\tilde {\mathbf M}}_\alpha (A({\tilde {\mathcal F}}))}{A({\tilde {\mathcal F}})}
			=\frac{B({\tilde {\mathcal F}})}{A({\tilde {\mathcal F}})}.
	\]
\end{proof}

\subsection{Intersections with lines}
In the proof of \autoref{infinite.genus} in \autoref{sec:inf-genus}, we shall need to prove that a generic leaf of a generic foliation from ${\mathcal A}_n^{sym}$ or ${\mathcal B}_n^{sym}$ intersects the line $y=0$ in infinitely many points.
The proof will be based on the following two statements.
First, we use a theorem due to Jouanolou to estimate the number of algebraic leaves.

\begin{theorem}
	[{\cite[Theoréme 3.3, p. 102]{J79}}]
	\label{few-algebraic-leaves}

	If a polynomial foliation ${\mathcal F}\in {\mathcal B}_n$ has at least $\frac 12 n(n - 1) + 2$ algebraic irreducible invariant curves, then it has a rational first integral.
\end{theorem}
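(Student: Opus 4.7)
The plan is to apply the classical cofactor method of Darboux and Jouanolou. For the vector field $X = P\,\partial_x + Q\,\partial_y$ of degree at most $n$ and an irreducible invariant polynomial $f_i$, the invariance of $\{f_i = 0\}$ forces the existence of a unique polynomial \emph{cofactor} $g_i$ satisfying $X(f_i) = g_i f_i$. Comparing degrees yields $\deg g_i \leqslant n-1$, so every cofactor lies in the vector space $V$ of polynomials in $x, y$ of degree at most $n-1$, of complex dimension $\binom{n+1}{2} = \frac{n(n+1)}{2}$.

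Given $N \geqslant \frac{n(n+1)}{2}+2$ such invariant curves, I would pass to the corresponding cofactors $g_1, \ldots, g_N \in V$ and note that the kernel of the linear evaluation map $\mathbb{C}^N \to V$, $(\lambda_i) \mapsto \sum_i \lambda_i g_i$, has dimension at least $2$ by the rank-nullity theorem. Selecting two linearly independent relations $(\alpha_i)$ and $(\beta_i)$, I form the closed logarithmic 1-forms
\begin{equation*}
\omega_1 = \sum_{i=1}^N \alpha_i \frac{df_i}{f_i}, \qquad \omega_2 = \sum_{i=1}^N \beta_i \frac{df_i}{f_i}.
\end{equation*}
Since $i_X \omega_1 = \sum_i \alpha_i g_i = 0$, and similarly for $\omega_2$, both forms are proportional to the dual foliation 1-form $\omega_\mathcal{F} = P\, dy - Q\, dx$. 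Consequently $\omega_2 = R\, \omega_1$ for a meromorphic function $R$ on $\mathbb{C}^2$. Multiplying through by the common denominator $\prod_i f_i$ expresses $\omega_1$ and $\omega_2$ as polynomial 1-forms $A_k\, dx + B_k\, dy$, and the proportionality $A_2 = R A_1$, $B_2 = R B_1$ exhibits $R$ as a ratio of polynomials, hence rational.

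To confirm that $R$ is a first integral, I differentiate $\omega_2 = R\,\omega_1$: from $d\omega_1 = d\omega_2 = 0$ we get $dR \wedge \omega_1 = 0$, so $dR$ is itself proportional to $\omega_\mathcal{F}$, yielding $X(R) \equiv 0$. Non-triviality of $R$ follows from the linear independence of the two relations: if $R$ were a constant $c$, then $\sum_i (\beta_i - c\alpha_i)\, df_i/f_i \equiv 0$, and taking residues along each irreducible divisor $\{f_i = 0\}$ (using that the $f_i$ are distinct irreducibles, so generic smooth points of $\{f_j = 0\}$ avoid the other divisors) would force $\beta_i = c\alpha_i$ for every $i$, contradicting linear independence.

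The main technical point is verifying that $R$ is genuinely rational (not merely meromorphic) and non-constant; both are handled by the common-denominator manipulation and the residue argument, respectively. The rest is routine bookkeeping with logarithmic derivatives, so I expect no serious obstacle beyond these two checks.
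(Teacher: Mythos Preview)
The paper does not prove this theorem at all; it simply attributes the result to Jouanolou \cite{J79} and uses it as a black box in the proof of Lemma~\ref{infinite-genus-details}. Your proposal is the standard Darboux--Jouanolou cofactor argument and is correct as written: the degree bound $\deg g_i\leqslant n-1$, the rank--nullity step producing a two-dimensional space of relations, the passage to closed logarithmic forms, the rationality of $R$ via clearing denominators and using $\gcd(P,Q)=1$, and the residue computation forcing non-constancy are all sound. One small point worth making explicit is that $\omega_1\not\equiv 0$ (needed before you can write $\omega_2=R\,\omega_1$); this follows from the same residue argument you use for non-constancy, since $\omega_1\equiv 0$ would force all $\alpha_i=0$.
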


Then we prove that a non-algebraic leaf intersects a generic line in infinitely many points.

\begin{lemma}
	\label{infinite-intersection}
	Consider a polynomial foliation ${\mathcal F}$ of ${\mathbb C}P^2$, a non-algebraic leaf $L$ and a~line $T\subset {\mathbb C}P^2$ not passing through singular points of ${\mathcal F}$.
	Then $\#(L\cap T)=\infty $.
\end{lemma}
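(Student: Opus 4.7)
The plan is to argue by contradiction. Assume $L\cap T=\{p_1,\dots,p_k\}$ is finite; the goal is to force $L$ to be algebraic, contradicting the hypothesis. Since $T$ contains no singular points of $\mathcal{F}$, each $p_j$ is a regular point of the foliation, and one chooses a flow box at $p_j$ with coordinates $(z,w)$ in which the leaves of $\mathcal{F}$ appear as $\{w=\text{const}\}$; the line $T$ is then locally either $\{z=0\}$ (transverse point) or $\{w=az^m+\cdots\}$ with $a\neq 0$ (tangency of order $m\geq 2$). After shrinking the flow box so that $p_j$ is its only element of $L\cap T$, any local leaf of $L$ inside the flow box, being some $\{w=c\}$, must meet $T$ only within $L\cap T$; solving the intersection equation pins $c$ down uniquely, so $L$ is locally the single smooth analytic disk through $p_j$.

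The next step is to show $\bar L\cap T=L\cap T$. If $x\in\bar L\cap T\setminus L$, pick a sequence $x_n\in L\setminus T$ converging to $x$ (possible since $L\cap T$ is discrete) and apply the flow-box picture at $x$. Writing $x_n=(z_n,w_n)$, the local leaf $\{w=w_n\}$ is part of $L$ globally and meets $T$ near $x$ at points determined by $w_n$. If the $w_n$ take infinitely many distinct values, these yield infinitely many distinct points of $L\cap T$ clustering at $x$, contradicting finiteness; otherwise $w_n=w_x$ eventually, which forces the leaf $\{w=w_x\}$ through $x$ to lie in $L$, giving $x\in L$, again a contradiction.

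The final task is to deduce that $\bar L$ is a closed analytic subvariety of $\mathbb{C}P^2$, so that Chow's theorem makes it algebraic and contradicts the non-algebraicity of $L$. For any regular point $y$ of $\mathcal{F}$ in $\bar L\setminus L$, the leaf $L_y$ through $y$ lies in $\bar L$ by $\mathcal{F}$-invariance and is distinct from $L$; then $L_y\cap T\subseteq L\cap T\subseteq L$ combined with disjointness of distinct leaves forces $L_y\cap T=\emptyset$. Applying the preceding paragraph to $L_y$ gives $\bar{L_y}\cap T=\emptyset$, so $\bar{L_y}$ is a compact subset of the affine chart $\mathbb{C}P^2\setminus T$. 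Ruling out this last configuration is the main obstacle: the restrictions of the affine coordinates to $L_y$ are bounded holomorphic functions, so if $L_y$ is parabolic they are constant and $L_y$ degenerates to a single point, while in the hyperbolic case a Bishop/Remmert--Stein-type argument combined with the polynomial structure of $\mathcal{F}$ forces $\bar{L_y}$ to be algebraic, and any algebraic curve in $\mathbb{C}P^2$ meets every line by B\'ezout, contradicting $\bar{L_y}\cap T=\emptyset$. Once no such $y$ exists, $\bar L\setminus L\subseteq\operatorname{Sing}(\mathcal{F})$ is finite, so Remmert--Stein extends $L$ analytically across $\operatorname{Sing}(\mathcal{F})$ and the first paragraph extends $\bar L$ smoothly across each $p_j$. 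Hence $\bar L$ is a closed analytic subvariety of $\mathbb{C}P^2$, algebraic by Chow, contradicting the assumption.
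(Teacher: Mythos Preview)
Your overall strategy is quite different from the paper's. The paper sends $T$ to the line at infinity, writes the $k$ local branches of $L$ near $T$ as graphs $y=\varphi_j(x)$, and shows that the elementary symmetric functions of the $\varphi_j$ extend to polynomials on the whole $x$–plane; the product $\prod_j (y-\varphi_j(x))$ is then a polynomial vanishing on $L$, giving the contradiction directly. You instead aim to show that $\bar L$ is a closed analytic subvariety of $\mathbb{C}P^{2}$ and invoke Chow. Both routes are legitimate, and your reduction to ``$\bar L\smallsetminus L\subset\operatorname{Sing}(\mathcal{F})$, then Remmert--Stein, then Chow'' is clean once the intermediate steps are in place.

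There is, however, a genuine gap at the point you yourself flag as the ``main obstacle'': ruling out a leaf $L_y$ with $\bar{L_y}$ compact in the affine chart $\mathbb{C}P^{2}\smallsetminus T\cong\mathbb{C}^{2}$. Your case split by the conformal type of $L_y$ does not close. In the hyperbolic case the appeal to a ``Bishop/Remmert--Stein-type argument'' to force $\bar{L_y}$ algebraic is circular: that is precisely the kind of conclusion you are trying to reach for $\bar L$, and neither Bishop nor Remmert--Stein applies without first controlling $\bar{L_y}\smallsetminus L_y$, which could itself contain further leaves with compact closure. Hyperbolic Riemann surfaces admit plenty of bounded non-constant holomorphic functions, so boundedness of the coordinate functions gives nothing there.

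The fix is to bypass the parabolic/hyperbolic dichotomy entirely and use the polynomial vector field directly. Pick any regular point $p_0\in L_y$ and solve $\dot\gamma=(P,Q)(\gamma)$, $\gamma(0)=p_0$, in complex time. Since the trajectory stays in the compact set $\bar{L_y}\subset\mathbb{C}^{2}$ and the vector field is entire, the solution extends to an entire map $\gamma:\mathbb{C}\to\mathbb{C}^{2}$ with bounded components; Liouville forces $\gamma$ constant, so $(P,Q)(p_0)=0$, contradicting regularity. This is presumably what the paper has in mind when it says ``it is easy to show that $L$ cannot be bounded''. With this replacement your argument goes through.
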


The proof is completely analogous to the proof of \cite[Lemma 28.10]{IYbook}.
This lemma states that a~non-algebraic leaf of a~foliation ${\mathcal F}\in {\mathcal A}_n'$ cannot approach the line at infinity only along the separatrices of singular points.
However, we repeat the proof here for completeness.

\begin{proof}
	Suppose the contrary, i.\ e.~$L$ is not algebraic and $\#(L\cap T)<\infty $.
	Since $T$ contains no singular points of ${\mathcal F}$, $T$ is not a leaf of ${\mathcal F}$, and the number of intersections $\#(L\cap T)$ counted with multiplicities is locally a constant.
	After a small perturbation of $T$, we may and will assume that all intersections of $L$ and $T$ are transverse.

	Make a projective coordinate change such that $T$ is mapped to the line at infinity $\{u=0\}$, and the point $v=\infty $ of the line at infinity does not belong to $L$.
	It is easy to show that $L$ cannot be bounded, hence it must intersect $\{u=0\}$ in at least one point.

	Suppose that the leaf $L$ is given by $y=\varphi _j(x)$, $j=1,2,\dots ,k$, $x\in ({\mathbb C}, \infty )$, in neighborhoods of $k$ points of $L\cap T$.
	Note that each $\varphi _j$ has a linear growth at infinity.
	Consider the product $\prod _{j=1}^k (y - \varphi _j(x))$;
	this is a polynomial in $y$, with symmetric functions $\sigma _1 = \sum _{j=1}^k \varphi _j$, $\sigma _2=\sum _{1\leq j<l\leq k} \varphi _j \varphi _l,$ \dots , $\sigma _k =\prod _{j=1}^k \varphi _j$ as coefficients.

	Let $P_i$ be projections of finite singularities of ${\mathcal F}$ to $x$-plane.
	It is possible to extend $\sigma _j$ holomorphically to ${\mathbb C}\setminus \set{P_1,\dots }$ by the symmetric combinations of intersections $L\cap \set{x = c}$, with multiplicities.
	Indeed, the number of these intersections stays locally the same, thus equals $k$ for any $c$.
	The intersections depend holomorphically on $c$ and stay bounded, otherwise the leaf $L$ would approach the line at infinity along $x = \const$, hence the point $v=\infty , u=0$ would belong to~$L$.

	Since $\sigma _j$ are bounded in any compact, $P_i$ are removable singularities of $\sigma _j$.

	So, the symmetric combinations of $\varphi _j$ extend holomorphically to ${\mathbb C}$ and have a polynomial growth at infinity.
	Thus they are polynomials in $x$, and the function $F=\prod _{j=1}^k (y - \varphi _j(x))$ is a polynomial in $x,y$.
	Hence $F=0$ is a polynomial equation defining the leaf $L$, and $L$ is algebraic.
	The contradiction shows that $\#(L\cap T)=\infty $ for a~non-algebraic $L$.
\end{proof}

\section{A leaf with many handles}
\label{sec:many-handles}

In this section we shall prove \autoref{positive.genus}.

Consider an~open subset ${\mathcal U}\subset {\mathcal A}_n$.
Shrinking ${\mathcal U}$ if necessary, we may and will assume that

\begin{itemize}
	\item ${\mathcal U}\subset {\mathcal A}_n^R$ (due to \autoref{thm:rigidity-isohol}, ${\mathcal A}_n^R$ is open and dense);
	\item $\Im \lambda _j({\mathcal F})\neq 0$ for ${\mathcal F}\in {\mathcal U}$ and $j=1,\dots ,n+1$;
	\item one can enumerate the singular points $a_1, \dots , a_{n+1}$ at the line at infinity so that $a_i$ depend analytically on ${\mathcal F}\in {\mathcal U}$;
	\item ranges of $a_i({\mathcal U})$ are small enough so that we can and shall fix a point $O$ and paths $\gamma _i$ as in \autoref{sub:rigidity} independently on ${\mathcal F}\in {\mathcal U}$;
	\item for ${\mathcal F}\in {\mathcal U}$, none of $\mu _i$ belongs to the unit circle.
	\item ${\mathbf M}_i\circ {\mathbf M}_j\neq {\mathbf M}_j\circ {\mathbf M}_i$ for ${\mathcal F}\in {\mathcal U}$ and $i\neq j$ (due to \autoref{non-commutative.monodromy}).
\end{itemize}

Note that passing to the complex conjugate coordinates $({\overline x}, {\overline y})$ in ${\mathbb C}^2$ replaces all $|\mu _j|=|\exp(2\pi i\lambda _j)|=\exp(-2\pi \Im \lambda _j)$ by $\exp(-2\pi \Im ({\overline \lambda }_j))=|\mu _j|^{-1}$.
Therefore, we can assume that at least two of $|\mu _j|$ are less than one (otherwise we just pass to the conjugate coordinates).
Recall that $\prod \mu _j=1$, hence at least one of $|\mu _j|$ is greater than one.
Let us reenumerate the singularities at the line at infinity so that the multipliers satisfy

\begin{itemize}
	\item $|\mu _1|<1$, $|\mu _2|>1$ and $|\mu _3|<1$.
\end{itemize}

Let ${\mathcal M}_0\subset {\mathcal U}$ be a~non-empty submanifold given by $\mu _1=\const$, $\mu _2=\const$ and $\mu _3=\const$.
Slightly perturbing constants in these equations, assume that for ${\mathcal F}\in {\mathcal M}_0$,

\begin{itemize}
	\item the multiplicative semigroup generated by $\mu _1$ and $\mu _2$ is~dense in ${\mathbb C}^*$;
	\item the multiplicative semigroup generated by $\mu _1^{-1}$ and $\mu _3$ is~dense in ${\mathbb C}^*$.
\end{itemize}

For $n=2$,  $\codim {\mathcal M}_0=2$: the equations $\mu _1=\const$, $\mu _2=\const$ imply that $\mu _3=\const$ since $\mu _1\mu _2\mu _3=1$.
For $n\geq 3$, $\codim {\mathcal M}_0=3$.
The following lemma is a key step in the proof of \autoref{positive.genus}.

\begin{lemma}
	\label{one.handle}
	Let ${\mathcal M}_0$ and $\mu _i$, $i=1, 2, 3$ be as above.
	Let ${\mathcal M}\subset {\mathcal M}_0$ be an analytic submanifold of dimension at least $7$.
	Then for any $\eps >0$ there exists a submanifold ${\mathcal M}'\subset {\mathcal M}$ of codimension one such that each ${\mathcal F}\in {\mathcal M}'$ has a leaf with a handle $\eps $-close to $L_\infty $.

	More precisely, for ${\mathcal F}\in {\mathcal M}'$ there exist two curves $\gamma ^{(1)},\gamma ^{(2)} \subset  L_\infty $ such that ${\mathbf M}_{\gamma ^{(1)}}$ and ${\mathbf M}_{\gamma ^{(2)}}$ have a common hyperbolic fixed point $B=B({\mathcal F})\in S$, the lifts $c_1,c_2$ of the curves $\gamma ^{(1)}, \gamma ^{(2)}$ starting from $B$ intersect transversally at exactly one point, and $c_1$ and $c_2$ are included by $\eps $-neighborhood of the line at infinity.
\end{lemma}

We shall postpone the proof of this lemma till the end of this section.
Now let us deduce \autoref{positive.genus} from this lemma.
First, we obtain many handles on different leaves.

\begin{corollary}
	\label{many.handles}

	For each $0\leq g\leq \dim {\mathcal M}_0-6$, there exists an analytic submanifold ${\mathcal M}_g\subset {\mathcal M}_0$ of codimension at most $g$ such that the leaves of each ${\mathcal F}\in {\mathcal M}_g$ possess $g$ handles (possibly on different leaves of ${\mathcal F}$) with hyperbolic generating cycles $(c_1, c_2), (c_3, c_4), \dots , (c_{2g-1}, c_{2g})$.
	The  generators of different handles do not intersect (even if they are located in the same leaf), and $c_{2j-1}$ transversally intersects $c_{2j}$ at a point $B_j\in S$.
\end{corollary}

\begin{proof}
	Let us prove the assertion by induction.
	For $g=0$, we just take ${\mathcal M}_0$.
	Suppose that we already have ${\mathcal M}_g$, $g\leq \dim {\mathcal M}_0-7$.
	Then $\dim {\mathcal M}_g\geq 7$.
	Using \autoref{one.handle}, we get a submanifold ${\mathcal M}_{g+1}\subset {\mathcal M}_g$ of codimension $1$ such that each ${\mathcal F}\in {\mathcal M}_{g+1}$ possesses a handle generated by $(c_{2g+1}, c_{2g+2})$ which is closer to $L_\infty $ than all the loops guaranteed by ${\mathcal M}_g$.
	Hence, ${\mathcal M}_{g+1}$ satisfies the assertion of this corollary.
	This completes the proof.
\end{proof}

\begin{proof}
	[Proof of \autoref{positive.genus} modulo \autoref{one.handle}]
	Let us apply the previous corollary to
	\[
		g=\frac{(n+1)(n+2)}2-4.
	\]
	This is possible since for $n\geq 2$ we have

	\[
		g=\frac{(n+1)(n+2)}2-4\leq (n+1)(n+2)-10\leq \dim {\mathcal M}_0-6.
	\]

	Note that the cycles $c_{2j-1}$ and $c_{2j}$ correspond to hyperbolic fixed points of the germs of monodromy maps ${\mathbf M}_{c_{2j-1}}, {\mathbf M}_{c_{2j}}:(S, B_j)\to (S, B_j)$.
	Hence they survive under a small perturbation.
	The submanifold ${\mathcal M}_g\subset {\mathcal M}_0$ is defined by $g$ equations of the form “Hyperbolic fixed points of ${\mathbf M}_{c_{2i-1}}$ and ${\mathbf M}_{c_{2i}}$ coincide”.

	Let ${\tilde {\mathcal M}}_0\subset {\mathcal U}$, ${\tilde {\mathcal M}}_0\supset {\mathcal M}_0$ be the submanifold defined by $\mu _1=\const$, $\mu _2=\const$.
	It is easy to see that $\dim {\tilde {\mathcal M}}_0=(n+1)(n+2)-3$ and ${\mathcal M}_g$ extends to a submanifold ${\tilde {\mathcal M}}_g\subset {\tilde {\mathcal M}}_0$ given by the same $g$ equations.
	Thus ${\tilde {\mathcal M}}_g$ has codimension $g$ in ${\tilde {\mathcal M}}_0$,

	\[
		\dim {\tilde {\mathcal M}}_g=\dim {\tilde {\mathcal M}}_0-g=\frac{(n+1)(n+2)}2+1=(g-1)+6
	\]

	The leaves of the foliations from ${\tilde {\mathcal M}}_g$ also have at least $g$ handles (possibly these handles are on the different leaves).
	Indeed, after a small perturbation of a foliation ${\mathcal F}\in {\mathcal M}_g$ inside ${\tilde {\mathcal M}}_g$, all cycles $c_j$ survive and still intersect transversally at the points $B_j=B_j({\mathcal F})$.

	Now, let us apply \autoref{volk.details} $(g-1)$ times to the monodromy maps ${\mathbf M}_1,{\mathbf M}_2$ and the points $B_1,B_2, \dots , B_g$; we obtain a~$6$-dimensional submanifold ${\hat {\mathcal M}}\subset {\tilde {\mathcal M}}_0$ such that for each ${\mathcal F}\in {\hat {\mathcal M}}$ all points $B_j$ are located in the same leaf, thus all generating cycles $c_j$ are located in the same leaf.
\end{proof}
\begin{remark}
	\label{rem:many-handles-Bn}
	The natural question is, whether the analogue of \autoref{positive.genus} holds true in some open subset in ${\mathcal B}_n$.
	This question is open.
	Here are some arguments towards its solution.

	Consider a small neighborhood of ${\mathcal A}_n$ in ${\mathcal B}_{n+1}$.
	The maps ${\mathbf M}_j$ survive in this neighborhood, though they do not preserve the origin anymore.
	The same equations used to define ${\hat {\mathcal M}}$ still define a submanifold in ${\mathcal B}_{n+1}$ such that each foliation from this submanifold possesses a leaf with at least $\frac{(n+1)(n+2)}2-4$ handles.

	We can repeat this construction for every ${\hat {\mathcal M}}$ produced by the proof of \autoref{positive.genus}, and obtain countably many submanifolds of ${\mathcal B}_{n+1}$.
	If the union of these submanifolds is dense in a neighborhood of ${\mathcal A}_n$ in ${\mathcal B}_{n+1}$, then the result generalizes to ${\mathcal B}_{n+1}$;
	however we do not know whether this statement holds true.
\end{remark}

Now let us prove \autoref{one.handle}.

\begin{proof}
	[Proof of \autoref{one.handle}]
	As in \autoref{volk.details}, let us choose a linearizing chart $z_{\mathcal F}$ of ${\mathbf M}_3$ analytically depending on ${\mathcal F}$.
	We will use this coordinate a few times, though most computations will be done in the $(u, v)$-coordinates.

	\paragraph{Construction of the first cycle.}
	Due to \autoref{lem:LCs-dense}, there exist $k$, $l$ and $m$ such that ${\mathbf M}_1^{-k} {\mathbf M}_2^{m} {\mathbf M}_1^{k+l}{\mathbf M}_2$ has a~hyperbolic fixed point $B({\mathcal F})$ near the origin.
	We require some additional conditions on $B({\mathcal F})$.
	More precisely, we proceed in three steps.
	First we choose a~domain~$U\subset S$, $O\notin {\overline U}$, for $B({\mathcal F})$ sufficiently close to~$O$, so that
	\begin{equation}
		\label{eqn:B-ineq-M1-M2}
		|{\mathbf M}_2(B({\mathcal F}))|>|B({\mathcal F})|>|{\mathbf M}_1(B({\mathcal F}))|.
	\end{equation}
	Then we shrink this domain so that in the chart $z_{\mathcal F}$ we have
	\begin{equation}
		\label{eqn:M1-prime-neq}
		\left|{\mathbf M}_1'(B({\mathcal F}))\right|\neq \left|\frac{{\mathbf M}_1(B({\mathcal F}))}{B({\mathcal F})}\right|.
	\end{equation}
	This is possible since ${\mathbf M}_1$ does not commute with ${\mathbf M}_3$, see \hyperref[proof:lem:LCs-dense]{proof} of \autoref{lem:LCs-dense}.
	Finally, we apply \autoref{lem:LCs-dense} choosing $k$, $l$, $m$ so large that
	\begin{equation}
		\label{eqn:k-l-large}
		|{\mathbf M}_1^{k+l}({\mathbf M}_2(B({\mathcal F})))|<|B({\mathcal F})|.
	\end{equation}

	Let $\gamma ^{(1)}$ be the representative of the class $[\gamma _2\gamma _1^{k+l}\gamma _2^m\gamma _1^{-k}]\in \pi _1(L_\infty )$ shown in \autoref{figure.c1}.
	\todo[inline,author=YK]{
		Should we describe $\gamma ^{(1)}$ and $\gamma ^{(2)}$ for visually impared readers?
	}
	Let $c_1$ be the lifting of $\gamma ^{(1)}$ starting at $B({\mathcal F})$.
	Since $B({\mathcal F})$ is a~hyperbolic fixed point of the monodromy along $\gamma ^{(1)}$, $c_1$ is a~hyperbolic limit cycle.
	Clearly, $c_1$ survives under a~small perturbation.

	\begin{figure}
		\centering
		\includegraphics{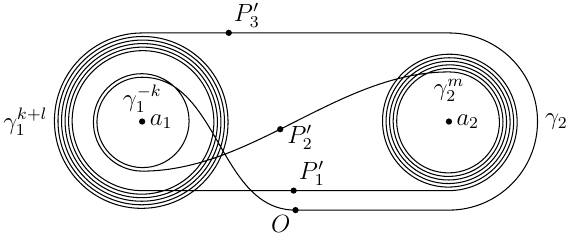}
		\caption{First cycle for $k=2$, $l=3$, $m=6$.}
		\label{figure.c1}
		\vspace{7mm}
		\includegraphics{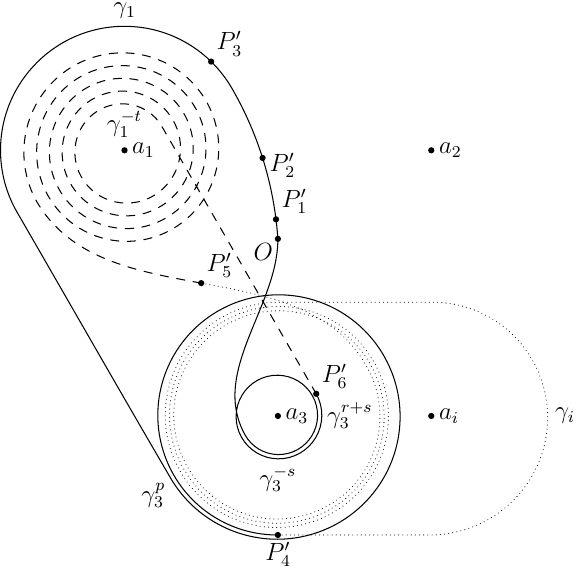}
		\caption{Second cycle for $p=2$, $q=1$, $r=1$, $s=2$, $t=5$}
		\label{figure.c2}
	\end{figure}

	\paragraph{Construction of the second cycle.}
	Fix a foliation ${\tilde {\mathcal F}}\in {\mathcal M}$, and apply \autoref{volk.details} to ${\mathcal M}$, ${\tilde {\mathcal F}}$, the points $A({\mathcal F})={\mathbf M}_1(B({\mathcal F}))$ and $B({\mathcal F})=B({\mathcal F})$, and the maps ${\mathbf M}_3$ and ${\mathbf M}_1$.
	Due to this theorem,  there exist $i\in \{1,\dots ,n+1\}$, $p\in {\mathbb N}$, $q\in \{0, 1\}$, $r\in {\mathbb N}$, $s\in {\mathbb N}$, $t\in {\mathbb N}$ such that the equality
	\[
		\left( {\mathbf M}_3^{-s}\circ {\mathbf M}_1^{-t}\circ {\mathbf M}_3^{r+s}\circ {\mathbf M}_i^q\circ {\mathbf M}_3^p\circ {\mathbf M}_1 \right)(B({\mathcal F}))=B({\mathcal F})
	\]
	defines a codimension one submanifold in some neighborhood ${\mathcal U}$, ${\tilde {\mathcal F}}\in {\mathcal U}\subset {\mathcal M}$.
	Let $\gamma ^{(2)}$ be the representative of $[\gamma _1\gamma _3^p\gamma _i^q\gamma _3^{r+s}\gamma _1^{-t}\gamma _3^{-s}]$ shown in \autoref{figure.c2}, let $c_2$ be the corresponding limit cycle.

	The cycles $c_1$ and $c_2$ are constructed.
	Let us prove that for $p$, $r$, $s$ large enough they satisfy the assertions of \autoref{one.handle}.

	\paragraph{Intersection of the cycles.}
	First, let us prove that for $p$ and $s$ large enough, $c_1\cap c_2=\set{B({\mathcal F})}$.
	Since $\gamma ^{(1)}$ and $\gamma ^{(2)}$  are the projections of $c_1$ and $c_2$ to the line at infinity, $c_1$ can intersect $c_2$ only above the intersection points of $\gamma ^{(1)}$ and $\gamma ^{(2)}$.
	Let $P_j$ be the points of $c_2$ that project to $P_j'$, $j=1,\dots ,6$ (see \autoref{figure.c2}).
	We shall deal separately with  arcs $P_6BP_4$, $P_4P_5$ and $P_5P_6$ of $c_2$.

	\begin{description}
		\item[Arc $P_6BP_4$ (solid line):]
			The arc $P_6'OP_4'$ of $\gamma ^{(2)}$ intersects $\gamma ^{(1)}$ in $\set{O, P_1', P_2', P_3'}$, hence we need to prove that none of $P_1$, $P_2$ and $P_3$ belongs to $c_1$.
			If $P_1\in c_1$, then $B$ is a fixed point of the monodromy map along the union of two arcs, $OP_1'$ on $\gamma ^{(1)}$ and $P_1'O$ on $\gamma ^{(2)}$.
			Thus  $({\mathbf M}_1^{k+l}\circ {\mathbf M}_2)(B)=B$, which contradicts \eqref{eqn:k-l-large}.
			Analogously, for~$P_2$ we get ${\mathbf M}_1^{-k}(B)=B$, but $B$ belongs to the domain of the linearizing chart of ${\mathbf M}_1$, so this is also impossible.
			For $P_3$ we get ${\mathbf M}_2(B)=B$, which contradicts \eqref{eqn:B-ineq-M1-M2}.
		\item[Arc $P_4P_5$ (dotted line):]
			Note that for large $p$ the arc $P_4P_5$ is much closer to $L_\infty $ than $c_1$, hence this arc is disjoint with $c_1$.
			Indeed, the $v$-coordinate of $P_4$ is $O(|\mu _3|^p)$ as $p\to \infty $.
			As we move along $\gamma _i$, the $v$-coordinate is multiplied by a bounded number.
			Then, as we make $r+s$ turns around $a_3$, we come even closer to the line at infinity.
			Therefore, $P_4P_5$ is $O(|\mu _3|^p)$-close to $L_\infty $.
			Recall that $p$ can be chosen arbitrarily large \emph{after} the choice of $c_1$, hence we can choose it so large that all points of $P_4P_5$ are much closer to $L_\infty $ than all points of $c_1$, in particular $P_4P_5$ does not intersect $c_1$.
		\item[Arc $P_5P_6$ (dashed line):]
			Similarly, going from $B$ along the arc $OP_6'P_5'\subset \gamma ^{(2)}$ in the opposite direction, we can see that all points of $P_5P_6$  are $O(|\mu _3|^s)$-close to $L_\infty $.
			Since $s$ can be chosen arbitrarily large \emph{after} we fix all other numbers, this part of $c_2$ can be made much closer to $L_\infty $ than all points of $c_1$, hence it does not intersect~$c_1$.
	\end{description}

	Therefore, $B({\mathcal F})$ is the only intersection point of $c_1$ and $c_2$.
	Note that this intersection is transverse, because the projections of $c_1$ and $c_2$ to $L_\infty $ intersect transversally.

	\paragraph{Hyperbolicity of the cycles.}
	Now let us prove that for sufficiently large numbers in \autoref{volk.details}, $c_2$ is a hyperbolic cycle.
	Indeed, due to \autoref{volk.details}, the derivative ${\mathbf M}_{\gamma ^{(2)}}'(B({\mathcal F}))$ can be made arbitrarily close to ${\tilde {\mathbf M}}_1'(B({\tilde {\mathcal F}}))\frac{B({\tilde {\mathcal F}})}{{\tilde {\mathbf M}}_1(B({\tilde {\mathcal F}}))}$.
	Here the former expression does not depend on the choice of coordinates, and the latter one is evaluated in $z_{\tilde {\mathcal F}}$.
	Due to \eqref{eqn:M1-prime-neq}, this ratio does not belong to the unit circle, hence one can choose $\alpha $ and $\beta $ in \autoref{volk.details} so that $c_2$ is a hyperbolic cycle.

	This completes the proof of the lemma, hence the proof of \autoref{positive.genus}.
\end{proof}

\section{Leaves of infinite genus}
\label{sec:inf-genus}

In this section we shall prove \autoref{infinite.genus}.

Consider the map $F_2:{\mathbb C}^2\to {\mathbb C}^2$ given by $(x, y) \mapsto  (z, w)=(x, y^2)$.
The image of ${\mathcal F}\in {\mathcal A}_n^{sym}$ under this map is a well-defined polynomial foliation.
More precisely, let $p$ and $q$ be polynomials such that $P(x, y) = yp(x, y^2)$ and $Q(x, y) = \frac 12 q(x, y^2)$.
Though the image of the \emph{vector field} \eqref{polynomial.vf} under $F_2$ is not well-defined, it is defined up to the sign change.
The additional time change $d\tau =y\,dt$ yields a~well-defined foliation ${\dot z}=p(z, w)$, ${\dot w}=q(z, w)$ which we denote by~$(F_2)_*{\mathcal F}$

It is easy to check that $F_2$ sends the leaves of ${\mathcal F}$ to the leaves of $(F_2)_*{\mathcal F}$ if we remove the points $\set{(x, 0)|q(x, 0)=0}$ both from domain and codomain.
Moreover, for each leaf $L$ of ${\mathcal F}$, the map $F_2|_L$ is a branched double covering with branch points at $L\cap \set{y=0}$.

The following lemma explicitly describes the open and dense subset of ${\mathcal A}_n^{sym}$ (or ${\mathcal B}_{n+1}^{sym}$) that satisfies the assertion of \autoref{infinite.genus}.

\begin{lemma}
	\label{infinite.genus.details}

	Consider a foliation ${\mathcal F}\in {\mathcal A}_n^{sym}$ such that

	\begin{itemize}
		\item ${\mathcal F}$ has no rational first integral;
		\item $(F_2)_*{\mathcal F}$ has no singular points on the projective line $\{w=0\}\subset {\mathbb C}P^2$.
	\end{itemize}

	Then ${\mathcal F}$ has finitely many (possibly zero) algebraic leaves, and all other leaves have infinite genus.
\end{lemma}

\begin{remark}
	We can also take the saturation of the set constructed above by the orbits of affine group.
	This adds $3$ to the dimension, but this saturation will be a more complicated object than an open dense subset of a linear subspace.
\end{remark}

\begin{proof}
	Since ${\mathcal F}$ has no rational first integral, \autoref{few-algebraic-leaves} implies that all but a finite number of leaves are non-algebraic.
	Let $L$ be a non-algebraic leaf of ${\mathcal F}$.
	Due to \autoref{infinite-intersection}, $F_2(L)$ intersects $\set{w=0}$ in infinitely many points.
	Since $q$ vanishes in at most finite number of these points, $L$ intersects $\set{y=0}$ in infinitely many points as well.

	Recall that the restriction $F_2|_L:L\to F_2(L)$ is a ramified double covering with branch points at $L\cap \set{y=0}$.
	Hence the covering $F_2|_L$ has countably many ramification points.
	Consider a closed disk $D\subset F_2(L)$ that contains $2N+1$ ramification points, none of them in $\partial D$.
	Then $\partial D$ lifts to a~circle, hence $F_2^{-1}(D)$ is a~disc with $g$ handles, $g\in {\mathbb Z}_+$.
	Due to \wiki{Riemann-Hurwitz formula}{Riemann--Hurwitz formula}\footnote{For historical reasons, Riemann--Hurwitz Formula deals only with branched coverings of \emph{closed} surfaces, though the proof is literally the same for surfaces with borders.},

	\[
		1-2g=\chi (F_2^{-1}(D))=2\chi (D)-2N-1=1-2N.
	\]

	Thus $g=N$.
	Finally, $L$ has infinite genus.
\end{proof}

\begin{remark}
	The last step can be done in a more intuitive manner.
	Consider infinitely many pairwise disjoint discs $D_i$, each contains three ramification points.
	For each disc consider two loops $c_1$, $c_2$ shown in \autoref{figure.cycles-handle}.
	Since there are exactly two ramification points inside each cycle $c_1$, $c_2$, the monodromy along $c_i$ does not swap the branches of $F_2$.
	Let ${\tilde c}_1$ and ${\tilde c}_2$ be the lifts of $c_1$, $c_2$ starting at the same preimage of $O$.
	The cycle going from $O$ to $P$ along $c_1$, then back to $O$ along $c_2$ has only one ramification point inside, hence ${\tilde c}_1$ does not meed ${\tilde c}_2$ in $F_2^{-1}(P)$.
	Therefore, ${\tilde c}_1$ and ${\tilde c}_2$ generate a~handle.
\end{remark}

\begin{figure}
	\centering
	\includegraphics{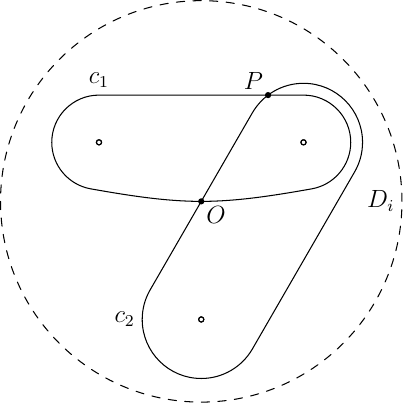}
	\caption{Two cycles whose preimages under $F_2$ generate a~handle.}
	\label{figure.cycles-handle}
\end{figure}

Since ${\mathcal B}_n\subset {\mathcal A}_n$, this lemma is applicable to foliations ${\mathcal F}\in {\mathcal B}_n^{sym}$ as well.

Now let us deduce \autoref{infinite.genus} from the above lemma.

\begin{proof}
	[Proof of \autoref{infinite.genus}]
	It is sufficient to prove that for $n\geq 2$ the subset of ${\mathcal A}_n^{sym}$ (resp., ${\mathcal B}_{n+1}^{sym}$) defined by the additional assumptions from \autoref{infinite.genus.details} is open and dense in the ambient projective space.

	Let us prove that a~generic foliation ${\mathcal F}\in {\mathcal A}_n^{sym}$ or ${\mathcal F}\in {\mathcal B}_{n+1}^{sym}$ has no rational first integral.
	Note that a complex hyperbolic singular point is not \emph{locally} integrable, hence a foliation with a~complex hyperbolic singular point cannot have a rational first integral.
	Since a complex hyperbolic singular point survives under small perturbations, it is sufficient to prove that the set of foliations from ${\mathcal A}_n^{sym}$ (resp., ${\mathcal B}_{n+1}^{sym}$) having a complex hyperbolic singular point is dense in the ambient space.

	Consider a foliation ${\mathcal F}_0$ from ${\mathcal A}_n^{sym}$ or ${\mathcal B}_{n+1}^{sym}$, $n\geq 2$.
	Let $(x_0, y_0)$ be one of its singular points with $y_0\neq 0$.
	Let $A_0=\begin{pmatrix} a & b \\ c & d\end{pmatrix}$ be its linearization matrix at $(x_0,y_0)$.
	Consider the two-parametric perturbation ${\mathcal F}_{\eps ,\delta }$ of ${\mathcal F}_0$, $\eps ,\delta \in ({\mathbb C}, 0)$, given by

	\begin{align*}
		{\dot x} &= P_0(x, y) + y(x-x_0) \eps ;\\
		{\dot y} &= Q_0(x, y) + (y^2 - y_0^2) \delta ,
	\end{align*}

	It is easy to see that the perturbed foliation belongs to the same class (${\mathcal A}_n^{sym}$ or ${\mathcal B}_{n+1}^{sym}$) and has a singularity at the same point $(x_0,y_0)$.
	The linearization matrix of ${\mathcal F}_{\eps ,\delta }$ at $(x_0, y_0)$ is $A_{\eps ,\delta }=\begin{pmatrix}a + y_0\eps & b \\ c & d+2y_0\delta \end{pmatrix}$.
	Clearly,

	\begin{align*}
		\tr A_{\eps ,\delta }-\tr A_0&=y_0(\eps +2\delta );\\
		\det A_{\eps ,\delta }-\det A_0&=y_0(\eps d+2\delta a+2y_0\eps \delta ).
	\end{align*}

	It is easy to see that we can achieve any small perturbation of the trace and determinant of the linearization matrix.
	Therefore, we can achieve any small perturbation of the eigenvalues.
	In particular, after some perturbation the singular point at $(x_0, y_0)$ becomes complex hyperbolic.

	The line $w=0$ contains no singular points of $(F_2)_*{\mathcal F}$ for a generic ${\mathcal F}$ since

	\begin{itemize}
		\item $(p|_{\set{w=0}}, q|_{\set{w=0}})$ may be any pair of polynomials of degrees $\deg {\mathcal F}-1$ and $\deg {\mathcal F}$, respectively, hence for a generic ${\mathcal F}$ they have no common roots;
		\item $(F_2)_*{\mathcal F}$ has a singularity at the intersection point of $\set{w=0}$ and the line at infinity if and only if $\deg q|_{\set{w=0}}<\deg {\mathcal F}$, and this is false for a generic ${\mathcal F}$.
	\end{itemize}
\end{proof}

\begin{remark}
	The arguments above do not work for ${\mathcal A}_1^{sym}$ and ${\mathcal B}_2^{sym}$ because generic foliations from these spaces have rational first integrals.
	Indeed, a generic foliation from the former space is affine equivalent to a foliation of the form

	\begin{align*}
		{\dot x}&=y;\\
		{\dot y}&=ax,
	\end{align*}

	which has the first integral $y^2-ax^2$.
	A generic foliation from ${\mathcal B}_2^{sym}$ is affine equivalent to a foliation of the form

	\begin{align*}
		{\dot x} &= xy;\\
		{\dot y} &= x+y^2+a,
	\end{align*}

	which has the first integral $\frac{(x +a)^2 - a y^2}{x^2}$.
\end{remark}

\appendix

\section{Proof of Ilyashenko's Theorem}
\label{sec:proof-Il}

In this Section we shall prove the following theorem.

\begin{theorem}
	For $n\geq 2$, let ${\mathcal A}_n^h\subset {\mathcal A}_n$ be the space of foliations given by homogeneous polynomials $P$ and $Q$.
	For a foliation ${\mathcal F}$ from some open dense subset of ${\mathcal A}_n^h$, all its leaves except for a finite set have infinite genus.
\end{theorem}

This theorem was proved by Yu.~Ilyashenko many years ago, and he communicated this proof to various people orally.
As far as we know, the proof was never published by him or anyone else.

\begin{proof}
	Take a homogeneous foliation ${\mathcal F}$.
	Note that the polynomials ${\tilde P}$ and ${\tilde Q}$ in \eqref{vf-uv} do not depend on $u$, hence in the chart $(u,v) = (\frac 1x, \frac yx)$ our foliation ${\mathcal F}$ is given by

	\begin{align*}
		{\dot u} &= u{\tilde P}(v)\\
		{\dot v} &= h(v)
	\end{align*}

	Clearly, the monodromy group at infinity is generated by linear maps ${\mathbf M}_j:u\mapsto \mu _j u$.

	Fix a cross-section $S$ given by $v=\const$ and a point $p\in S\setminus L_\infty $.
	Let us find a handle passing through $p$.

	The monodromy maps along loops $[\gamma _2,\gamma _1]=\gamma _2\gamma _1\gamma _2^{-1}\gamma _1^{-1}$ and $[\gamma _3,\gamma _2^{-1}]=\gamma _3\gamma _2^{-1}\gamma _3^{-1}\gamma _2$ are identity maps, hence the lifts $c_1$ and $c_2$ of these loops starting at $p$ are closed loops.

	\begin{figure}
		\centering
		\includegraphics{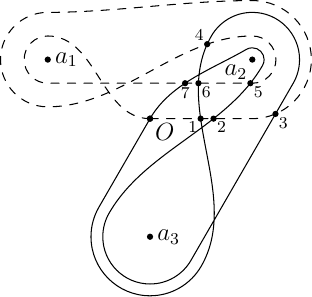}
		\caption{Cycles $c_1$ (dashed) and $c_2$}
	\end{figure}

	Note that these loops intersect only at $p$.
	Indeed, if $c_1$ and $c_2$ intersect above one of $7$ other intersection points of $[\gamma _2,\gamma _1]$ and $[\gamma _3,\gamma _2^{-1}]$, then $p$ is a fixed point of one of the maps ${\mathbf M}_3$, ${\mathbf M}_2$, ${\mathbf M}_2^{-1}\circ {\mathbf M}_3$, ${\mathbf M}_2^{-1}\circ {\mathbf M}_1^{-1}\circ {\mathbf M}_3$, ${\mathbf M}_1\circ {\mathbf M}_2$, ${\mathbf M}_1^{-1}\circ {\mathbf M}_3$, ${\mathbf M}_1$, respectively.
	In a generic case, all these maps are linear non-identical, thus they have no fixed points except zero.

	Thus $c_1$ and $c_2$ intersect transversally at one point, so we have found a handle passing through~$p$.

	Consider a leaf $L$ of ${\mathcal F}$ which is not a separatrix $v=a_j$ of a singular point of ${\mathcal F}$ at the line at infinity.
	In a generic case (say, if $|\mu _j|\neq 1$ for some $j$), $L$ intersects $S$ arbitrarily close to $L_\infty $.
	Since each intersection point produces a handle, $L$ has infinite genus.
	Hence any leaf except separatrices at $a_j$ has infinite genus.
\end{proof}

\section*{Acknowledgements}

We proved these results and wrote the first version of this article during our 5-months visit to Mexico (Mexico City, then Cuernavaca) in 2014.
We are very grateful to UNAM (Mexico) and HSE (Moscow) for supporting this visit.
Our deep thanks to Laura Ortiz Bobadilla, Ernesto Rosales-González and Alberto Verjovsky, for invitation to Mexico and for fruitful discussions.
It was Laura Ortiz who pointed us to the question about the leaves with infinite genus.

We are thankful to Tatiana Firsova who gave us an idea of a simplification in the second part of the article.
We are also grateful to Yulij Ilyashenko for permanent encouragement, and to Victor Kleptsyn for useful discussions.

Our big gratitude to Valente Ramirez who read first versions of this article and made a lot of useful suggestions.
We are very grateful to Bertrand Deroin whose corrections and comments motivated us to make the article more rigorous, and much more readable.
\printbibliography
\newpage
\listoftodos
\end{document}